\newcommand\footnoteref[1]{\protected@xdef\@thefnmark{\ref{#1}}\@footnotemark}
\definecolor{grey}{rgb}{0.95,0.95,0.95}
\definecolor{green}{rgb}{0.2,0.6,0.4}
\newcommand{\N}{\mathbb{N}}
\newcommand{\Psf}{\mathsf{P}}
\newcommand{\Qsf}{\mathsf{Q}}
\newcommand{\biimp}{\leftrightarrow}
\newcommand{\Ccal}{\mathcal{C}}
\newcommand{\Fcal}{\mathcal{F}}
\newcommand{\Vcal}{\mathcal{V}}
\newcommand{\Mcal}{\mathcal{M}}
\newcommand{\Rcal}{\mathcal{R}}
\newcommand{\uh}{{\upharpoonright}}
\renewcommand{\setminus}{\smallsetminus}
\newcommand{\tuple}[1]{\left\langle #1 \right\rangle}
\newcommand{\s}[1]{\ensuremath{\sf{#1}}}
\DeclareMathOperator{\rca}{\s{RCA}_0}
\DeclareMathOperator{\aca}{\s{ACA}}
\DeclareMathOperator{\wkl}{\s{WKL}}
\DeclareMathOperator{\wwkl}{\s{WWKL}}
\DeclareMathOperator{\dnr}{\s{DNR}}
\DeclareMathOperator{\rt}{\s{RT}}
\DeclareMathOperator{\srt}{\s{SRT}}
\DeclareMathOperator{\sads}{\s{SADS}}
\DeclareMathOperator{\emo}{\s{EM}}
\DeclareMathOperator{\opt}{\s{OPT}}
\definecolor{lightblue}{HTML}{e6e6e6}
\definecolor{lightred}{HTML}{eca6a6}
\definecolor{lightgreen}{RGB}{164,244,140}
\newtheoremstyle{custom}
  {10pt}
  {10pt}
  {\normalfont}
  {}
  {\bfseries}
  {}
  { }
  {}
\theoremstyle{custom}
\newtheorem{theorem}{Theorem}[section]
\newtheorem{lemma}[theorem]{Lemma}
\newtheorem{question}[theorem]{Question}
\newtheorem{corollary}[theorem]{Corollary}
\begin{document}

\title{The reverse mathematics of non-decreasing subsequences}
\author{Ludovic Patey}

\begin{abstract}
Every function over the natural numbers has an infinite subdomain on which the function is non-decreasing.
Motivated by a question of Dzhafarov and Schweber, we study the reverse mathematics of variants of this statement.
It turns out that this statement restricted to computably bounded functions is computationally weak
and does not imply the existence of the halting set. On the other hand, we prove that it is not a consequence
of Ramsey's theorem for pairs. This statement can therefore be seen as an arguably natural principle
between the arithmetic comprehension axiom and stable Ramsey's theorem for pairs.
\end{abstract}

\maketitle

\section{Introduction}

A \emph{non-decreasing subsequence} for a function $f : \N \to \N$
is a set~$X \subseteq \N$ such that $f(x) \leq f(y)$ for every pair~$x < y \in X$.
Every function over~$\N \to \N$ admits an infinite non-decreasing subsequence.
Moreover, such a sequence can be computably, but not uniformly obtained from the function~$f$.
Indeed, given $f : \N \to \N$, either there is a value~$y \in \N$
for which the set~$S_y = \{ x \in \N : f(x) = y \}$ is infinite, or for every $y \in \N$,
there is a threshold $t \in \N$ after which $f(x) > y$ for every~$x > t$.
In the former case, the set~$S_y$ is an infinite $f$-computable non-decreasing subsequence for~$f$,
while in the latter case, one can $f$-computably thin out the set~$\N$ to obtain an infinite,
strictly increasing subsequence. This non-uniform argument can be shown to be necessary
by Weihrauch reducing the limited principle of omniscience ($\mathsf{LPO}$) to this statement~\cite{Brattka2011Weihrauch}.

In this paper, we study the reverse mathematics of variants of this statement
by considering various classes of non-computable functions over~$\N \to \N$.
This study is motivated by the following question of Dzhafarov and Schweber in MathOverflow~\cite{Dzhafarov2016Finding}
and taken up by Hirschfeldt in his open questions paper~\cite{Hirschfeldt2016Some}.
A set~$X$ is a \emph{limit non-decreasing subsequence} for a stable
function $f : \N \times \N \to \N$ if it is a non-decreasing subsequence of its limit
function $\tilde{f} : \N \to \N$ defined by $\tilde{f}(x) = \lim_s f(x, s)$.

\begin{quote}
Let $f : \N \times \N \to \N$ be a computable function
such that $f(x, s+1) \leq f(x, s)$ for every~$x, s \in \N$.
Let~$X$ be an infinite limit non-decreasing subsequence for~$f$.
How complicated must such an~$X$ be? In particular, can it avoid computing the halting set?
\end{quote}

Let~$\mathsf{LNS}$ be the statement asserting the existence of an infinite limit non-decreasing subsequence
for any such function $f : \N \times \N \to \N$.
Liang Yu noticed that $\mathsf{LNS}$ implies the existence of a \emph{diagonally non-computable} function,
that is, a function~$h : \N \to \N$ such that~$h(e) \neq \Phi_e(e)$ for every~$e \in \N$.
We identify a natural strengthening of~$\mathsf{LNS}$ that we call~$\mathsf{CNS}$, standing
for ``computably bounded non-decreasing subsequence''. We prove that every computable instance
of~$\mathsf{CNS}$ admits low${}_2$ solutions using the first jump control of Cholak, Jockusch and Slaman~\cite{Cholak2001strength},
and show that $\mathsf{CNS}$ is a computationally weak statement by proving
that it does not imply weak weak K\"onig's lemma ($\wwkl$). On the other hand, $\mathsf{CNS}$ is not a consequence of 
Ramsey's theorem for pairs~($\rt^2_2$) and implies stable Ramsey's theorem for pairs~($\srt^2_2$).
Finally, we separate~$\mathsf{LNS}$ from~$\mathsf{CNS}$ by proving that the former does not imply 
the stable ascending descending sequence principle ($\sads$) in reverse mathematics.

\section{The weakness of non-decreasing subsequences}

First, note that the general statement
of the existence of a non-decreasing sequence for any function over $\N \to \N$
implies the existence of the halting set. Indeed, let~$\mu$ be the \emph{modulus}
function of $\emptyset'$, that is, $\mu(x)$ is the minimal stage~$s$
such that~$\emptyset'_s \uh x = \emptyset' \uh x$,
and let~$f : \N \to \N$ be the function defined by $f(x) = \mu(n) - k - 1$,
where $n$ and~$k$ are the unique solution to the equation $x = k + \sum_{j < n} \mu(j)$
satisfying $k < \mu(n)$.

\begin{figure}[htbp]
\begin{center}
\begin{tikzpicture}[x=0.5cm, y=0.5cm]

	\node at (-1, 2) {\scriptsize $\mu(0)$};
	\node at (-1, 3) {\scriptsize $\mu(1)$};
	\node at (-1, 5) {\scriptsize $\mu(2)$};

	\draw[step=0.5cm,gray,very thin] (0,0) grid (10,6);
	\draw[thick,->] (0, 0) -- (10, 0);
  \draw[thick, ->] (0, 0) -- (0, 6);
	\draw[thick] (0, 2) -- (2, 0) -- (2, 3) -- (5, 0) -- (5, 5) -- (9, 1);
	\draw[thick, dotted] (9, 1) -- (10, 0);

\end{tikzpicture}
\end{center}
\end{figure}

The above argument uses finite decreasing sequences
to ensure a sufficient amount of sparsity in the non-decreasing subsequences for~$f$,
to compute fast-growing functions. At first sight, such an argument does not seem to be applicable
to $\mathsf{LNS}$ since the value of~$\tilde{f}(x) = \lim_s f(x, s)$ is bounded by $f(x, 0)$
for every instance~$f$ of~$\mathsf{LNS}$. Therefore, one cannot force the solutions to have a hole
of size more than~$f(x, 0)$ starting from~$x$.
Actually, this computable bounding of the function $\tilde{f}$ is the essential feature
of the weakness of the $\mathsf{LNS}$ statement.
A function~$f : \N \to \N$ is \emph{computably bounded} if it is dominated by a computable function.
Let~$\mathsf{CNS}$ be the statement ``Every computably bounded $\Delta^0_2$ function
admits an infinite non-decreasing subsequence.''
In particular, $\mathsf{CNS}$ generalizes $\mathsf{LNS}$, in that every instance of $\mathsf{LNS}$
can be seen as the $\Delta^0_2$ approximation of a computably bounded function.
As a warm-up, we prove that $\mathsf{CNS}$ admits \emph{cone avoidance}.

\begin{theorem}
Fix a set~$C$ and a set~$A \not \leq_T C$.
For every $C$-computably bounded function~$f : \N \to \N$, there is an infinite set~$G$
non-decreasing for~$f$ such that~$A \not \leq_T G \oplus C$.
\end{theorem}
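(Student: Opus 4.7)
The plan is to construct $G$ via a Mathias-style forcing argument, handling the fact that $f$ need not be $C$-computable by restricting reservoirs to sets on which $f$ has a uniform structure, so that non-decreasingness of finite extensions becomes automatic.

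First, I would case-split non-effectively on $f$. By the usual dichotomy, either there is a minimal $k$ such that the fiber $S_k = \{x : f(x) = k\}$ is infinite, or $f(x) \to \infty$ as $x \to \infty$. Let $S$ denote $S_k$ in the first case, and in the second case let $S$ be any infinite subset of $\N$ on which $f$ is strictly increasing. In both cases, any finite increasing subset of $S$ is automatically non-decreasing for $f$, and can be concatenated to any finite non-decreasing prefix (possibly after taking a cofinal tail of $S$ whose $f$-values exceed the current maximum). The construction therefore reduces to finding an infinite $G \subseteq S$ satisfying $A \not\leq_T G \oplus C$.

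Next, I would carry out Mathias forcing over $S$ with conditions $(F, Y)$ where $F \subseteq S$ is finite, $Y \subseteq S$ is an infinite reservoir above $\max F$, and the invariant $A \not\leq_T C \oplus Y$ is maintained throughout. The cone-avoidance extension lemma follows the standard template: for each Turing functional $\Phi$, either there exist $\sigma \subseteq Y$ and $n$ with $\Phi^{(F \cup \sigma) \oplus C}(n) \downarrow \neq A(n)$, in which case we extend to $F \cup \sigma$ and restrict $Y$ to its tail; or no such $\sigma$ exists, in which case a $(C \oplus Y)$-computable enumeration of convergent computations $\Phi^{(F \cup \sigma) \oplus C}(n)$ must return $A(n)$ whenever it halts, yielding $A \leq_T C \oplus Y$ and contradicting the invariant. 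The $Y$-uniformity inherited from $S$ is what guarantees that the enumerated $\sigma$s automatically give non-decreasing extensions, so no $f$-oracle is needed.

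The main obstacle is establishing and preserving the invariant $A \not\leq_T C \oplus Y$, since $S$ itself may compute $A$ over $C$ (for instance if $f$ happens to encode $A$ into its fibers). To initialize, I would invoke the cone-avoidance lemma for infinite subsets: for any infinite $S$ and any $A \not\leq_T C$, there is an infinite $Y_0 \subseteq S$ with $A \not\leq_T C \oplus Y_0$, obtained by a direct $(A \oplus S \oplus C)$-effective diagonalization (we place no bound on the complexity of $Y_0$). The same tool is applied inside the forcing to produce suitable thinnings after each extension. Iterating the extension lemma across all Turing functionals then yields the desired $G$ in the limit.
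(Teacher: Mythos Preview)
Your approach has a genuine gap: the ``cone-avoidance lemma for infinite subsets'' you invoke is false. Take $S = \{\langle n, A\upharpoonright n\rangle : n \in \N\}$; every infinite $G \subseteq S$ computes $A$, since from any element of $G$ one decodes an initial segment of $A$, and $G$ contains such codes of unbounded length. Hence there is no infinite $Y_0 \subseteq S$ with $A \not\leq_T Y_0 \oplus C$, and no ``direct $(A \oplus S \oplus C)$-effective diagonalization'' can produce one. This $S$ arises as the $0$-fiber of the $\{0,1\}$-valued (hence trivially $C$-computably bounded) function $f$ defined by $f(x) = 0$ if $x \in S$ and $f(x) = 1$ otherwise, so your case-split on the minimal infinite fiber commits you to exactly this bad set.

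A second red flag is that your argument never uses the $C$-computable bound $b$. Without that hypothesis the theorem fails (the modulus example opening Section~2 gives an unbounded $f$ all of whose infinite non-decreasing subsequences compute $\emptyset'$), so any correct proof must exploit $b$ somewhere. The paper uses $b$ to make the class $\mathcal{F}_{x,i}$ of candidate functions a $\Pi^{0,X\oplus C}_1$ class, so that the cone avoidance basis theorem applies; this is how one forces divergence of $\Gamma^{G\oplus C}$ while keeping the invariant $A \not\leq_T X \oplus C$ on the reservoir. The other key tool is \emph{strong} cone avoidance for the pigeonhole principle, whose conclusion is disjunctive: some color admits a cone-avoiding infinite homogeneous set, but one cannot fix the color in advance. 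Your strategy of selecting a single fiber $S_k$ up front and then working entirely inside it is precisely what that disjunction does not license.
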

\begin{proof}
Let~$b : \N \to \N$ be a $C$-computable function bounding~$f$.
Assume that there is no infinite non-decreasing subsequence~$G$ such that~$A \not \leq_T G \oplus C$, 
otherwise we are done. We will construct the set~$G$ using a variant of Mathias forcing.

Our forcing condition are pairs $(F, X)$ where
$F$ is a finite set of integers non-decreasing for~$f$, $X$ is an infinite set such that $\max F < \min X$
and such that $f(x) \leq f(y)$ for every~$x \in F$ and $y \in X$.
We furthermore require that $A \not \leq_T X \oplus C$.
A condition $d = (E, Y)$ extends $c = (F, X)$
if~$d$ Mathias extends $c$, that is, $E \supseteq F$, $Y \subseteq X$ and $E \setminus F \subseteq X$.
A set~$G$ \emph{satisfies} a condition $(F, X)$ if
$F \subseteq G \subseteq F \cup X$.
We start by proving that every sufficiently generic filter for this notion of forcing
yields an infinite set.

\begin{lemma}\label{lem:cone-avoidance-ext}
For every condition~$c = (F, X)$, there is an extension $d = (E, Y)$ of~$c$
such that $|E| > |F|$.
\end{lemma}
\begin{proof}
Pick any~$x \in X$. By strong cone avoidance
of the infinite pigeonhole principle~\cite{Dzhafarov2009Ramseys},
there is an infinite set~$Y \subseteq X \setminus [0, x]$ such that
either~$f$ is constant over~$Y$ for some value smaller than~$f(x)$, or~$f(y) \geq f(x)$ for every~$y \in Y$.
In the former case, $Y$ is an infinite non-decreasing subsequence for~$f$
such that~$A \not \leq_T Y \oplus C$, contradicting our assumption.
In the latter case, the condition~$d = (F \cup \{x\}, Y)$ is the desired extension of~$c$.
\end{proof}

A condition~$c$ \emph{forces} a formula~$\varphi(G)$ if $\varphi(G)$ holds
for every infinite set~$G$ satisfying~$c$. 
We now prove that $A \not \leq_T G \oplus C$ for every set $G$ yielded
by a sufficiently generic filter.

\begin{lemma}\label{lem:cone-avoidance-force}
For every condition~$c = (F, X)$ and every Turing functional~$\Gamma$,
there is an extension~$d$ of~$c$ forcing~$\Gamma^{G \oplus C} \neq A$. 
\end{lemma}
\begin{proof}
For every~$x \in \N$ and~$i < 2$, let~$\Fcal_{x,i}$ be the the $\Pi^{0,X \oplus C}_1$ class of all functions $g : \N \to \N$
dominated by~$b$ such that for every set~$E \subset X$ non-decreasing for~$g$,
\[
\Gamma^{(F \cup E) \oplus C}(x) \uparrow \mbox{ or } \Gamma^{(F \cup E) \oplus C}(n) \downarrow \neq i
\]
Also define $P = \{ (x, i) : \Fcal_{x, i} = \emptyset \}$.
We have three outcomes.
In the first case, $\Fcal_{x, 1-A(x)} \in P$ for some~$x \in \N$. In other words, $\Fcal_{x, 1-A(x)} = \emptyset$.
In particular, $f \not \in \Fcal_{x, 1-A(x)}$, so there is a finite set~$E \subseteq X$
non-decreasing for~$f$, such that $\Gamma^{(F \cup E) \oplus C}(x) \downarrow = 1-A(x)$.
Apply strong cone avoidance avoidance of the infinite pigeonhole principle as in Lemma~\ref{lem:cone-avoidance-ext}
to obtain an infinite set~$Y \subseteq X$ such that
$d = (F \cup E, Y)$ is a valid extension of~$c$ forcing $\Gamma^{G \oplus C}(x) \downarrow \neq A(x)$.

In the second case, there is some~$x \in \N$ such that~$\Fcal_{x, A(x)} \not \in P$.
By the cone avoidance basis theorem~\cite{Jockusch197201}, there is some~$g \in \Fcal_{x, A(x)}$
such that~$A \not \leq_T g \oplus X \oplus C$. We can $g \oplus X$-computably thin out
the set~$X$ to obtain an infinite set~$Y$ non-decreasing for~$g$.
In particular the condition~$d = (F, Y)$ is a valid extension of~$c$
forcing~$\Gamma^{G \oplus C}(x) \uparrow$ or~$\Gamma^{G \oplus C}(x) \downarrow \neq A(x)$. 

In the last case, for every~$x \in \N$ and~$i < 2$, $(x, i) \in P \biimp A(x) = i$.
This case cannot happen, since otherwise $A \leq_T P \leq_T X \oplus C$, contradicting our assumption.
\end{proof}

Let~$\Fcal = \{c_0, c_1, \dots\}$ be a sufficiently generic filter containing $(\emptyset, \omega)$,
where~$c_s = (F_s, X_s)$. The filter~$\Fcal$ yields a unique set~$G = \bigcup_s F_s$.
By Lemma~\ref{lem:cone-avoidance-ext}, the set~$G$ is infinite,
and by definition of a condition, $G$ is non-decreasing for~$f$.
By Lemma~\ref{lem:cone-avoidance-force}, $A \not \leq_T G \oplus C$.
This completes the proof.
\end{proof}

K\"onig's lemma asserts that every infinite, finitely branching tree admits an infinite path.
Weak K\"onig's lemma ($\wkl$) is the restriction of K\"onig's lemma to binary trees.
$\wkl$ plays an important role in reverse mathematics as many statements happen to be equivalent to it~\cite{Simpson2009Subsystems}.
It is therefore natural to compare $\mathsf{CNS}$ and~$\mathsf{LNS}$ to weak K\"onig's lemma.
Actually, we will prove that~$\mathsf{CNS}$ does not imply an even weaker variant of K\"onig's lemma, namely,
weak weak K\"onig's lemma.
A binary tree~$T \subseteq 2^{<\N}$ has \emph{positive measure} if $\lim_s \frac{|\{\sigma \in T : |\sigma| = s\}|}{2^s} > 0$. 
Weak weak K\"onig's lemma ($\wwkl$) is the restriction of~$\wkl$ to binary trees of positive measure.
It can be seen as asserting the existence of a random real, in the sense of Martin-L\"of~\cite{Downey2010Algorithmic}.
Liu~\cite{Liu2015Cone} introduced the notion of constant-bound enumeration avoidance to separate Ramsey's theorem for pairs
from weak weak K\"onig's lemma. We shall use the same notion to separate $\mathsf{CNS}$ from $\wwkl$.

A \emph{$k$-enumeration} (or $k$-enum) of a class $\Ccal \subseteq 2^{\N}$ is a sequence $D_0, D_1, \dots$
such that for each $n \in \N$, $|D_n| \leq k$, $(\forall \sigma \in D_n)|\sigma| = n$
and $\Ccal \cap [D_n] \neq \emptyset$, where $D_n$ is seen as a clopen set of reals in the Cantor space.
A \emph{constant-bound enumeration} (or c.b-enum) of $\Ccal$ is a $k$-enum of $\Ccal$ for some $k \in \N$.
We now prove that $\mathsf{CNS}$ does not imply weak weak K\"onig's lemma over $\rca$.

\begin{theorem}\label{thm:cb-enum-avoidance-cns}
Fix a set~$C$ and a class~$\Ccal \subseteq 2^\N$ with no $C$-computable c.b-enum.
For every $C$-computably bounded function~$f : \N \to \N$, there is an infinite
non-decreasing subsequence~$G$ such that~$\Ccal$ has no $G \oplus C$-computable c.b-enum.
\end{theorem}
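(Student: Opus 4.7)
The plan is to mimic the preceding cone-avoidance proof, substituting Liu's c.b-enum machinery~\cite{Liu2015Cone} for the cone-avoidance tools. Fix a $C$-computable bound $b$ on $f$, and assume toward contradiction that no infinite non-decreasing subsequence for $f$ preserves the non-existence of c.b-enums of~$\Ccal$. Use the same variant of Mathias forcing, with reservoir invariant ``$A \not\leq_T X \oplus C$'' replaced by ``$\Ccal$ has no $X \oplus C$-computable c.b-enum.''

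The extension lemma transfers verbatim, using Liu's strong c.b-enum avoidance of the infinite pigeonhole principle in place of strong cone avoidance: given a condition $c = (F, X)$ and $x \in X$, we obtain an infinite $Y \subseteq X \cap (x, \infty)$ on which either $f$ is constant with a value $< f(x)$ (excluded by our standing assumption) or $f \geq f(x)$ uniformly, yielding the extension $(F \cup \{x\}, Y)$.

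The forcing lemma is the substantive step. Fix $c = (F, X)$, a Turing functional $\Gamma$, and $k \in \N$; the goal is to extend $c$ to force $\Gamma^{G \oplus C}$ not to be a $k$-enumeration of $\Ccal$. For each $n \in \N$ and $D \subseteq 2^n$ with $|D| \leq k$, let $\Fcal_{n, D}$ be the $\Pi^{0, X \oplus C}_1$ class of $b$-bounded $g$ such that for every finite $E \subseteq X$ non-decreasing for $g$, $\Gamma^{(F \cup E) \oplus C}(n)$ either diverges or differs from~$D$. Let $P = \{(n, D) : \Fcal_{n, D} = \emptyset\}$; it is $X \oplus C$-c.e.\ uniformly in $n$. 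In parallel with Lemma~\ref{lem:cone-avoidance-force}, split into three outcomes: (i) some $(n, D) \in P$ with $[D] \cap \Ccal = \emptyset$ yields a witnessing $E \subseteq X$ non-decreasing for $f$ (since $f \not\in \Fcal_{n, D}$) from which pigeonhole thinning produces an extension forcing $\Gamma^{G \oplus C}(n) = D$ with $[D] \cap \Ccal = \emptyset$; (ii) some $(n, D) \not\in P$ lets Liu's c.b-enum avoidance basis theorem produce $g \in \Fcal_{n, D}$ that preserves the non-existence of c.b-enums of $\Ccal$, along which $X$ may be thinned to a reservoir non-decreasing for $g$, forcing $\Gamma^{G \oplus C}(n) \neq D$; (iii) otherwise, the columns $\{D : (n, D) \in P\}$ of $P$ form an $X \oplus C$-computable c.b-enum of $\Ccal$, contradicting the assumption.

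The main obstacle is arranging the case split so that outcome (iii) genuinely yields a c.b-enum: one must show that, absent the opportunities in (i) and (ii), the columns of $P$ are uniformly bounded in size, non-empty, and contain only $D$'s with $[D] \cap \Ccal \neq \emptyset$. This bounding argument is the heart of Liu's technique and is precisely what enables c.b-enum avoidance to serve as a replacement for cone avoidance in Mathias-forcing constructions of this kind.
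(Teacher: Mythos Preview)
Your outcome (ii) does not do enough. Forcing $\Gamma^{G \oplus C}(n) \neq D$ for a \emph{single} $D$ does not prevent $\Gamma^{G \oplus C}$ from being a $k$-enum of~$\Ccal$: the functional may still output some other $D'$ with $[D'] \cap \Ccal \neq \emptyset$. In the cone-avoidance lemma this sufficed because the target set~$A$ has a unique correct value at each~$x$, so ruling out one value diagonalizes; a $k$-enum has many acceptable outputs at each~$n$, so ruling out one is vacuous. Consequently your trichotomy collapses: ``not (i) and not (ii)'' would require every $(n,D)$ to lie in $P$ and to meet $\Ccal$, which fails for any singleton $D$ missing~$\Ccal$, so outcome (iii) never fires and you are always stuck in (ii) without making progress.

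The paper addresses this by enriching the conditions to triples $(F, X, S)$ where $S$ is a finite set of $b$-bounded functions, and requiring extensions to be simultaneously non-decreasing for every $g \in S$. The classes $\Fcal_D$ are then defined over $|S|$-tuples of functions, with the output required to \emph{intersect} $D$ rather than equal it, and $D$ ranges over all subsets of $2^n$. The analog of your case (ii) becomes: for some~$n$, every $k$-partition $\Vcal_0, \dots, \Vcal_{k-1}$ of $P_n = \{D : \Fcal_D \neq \emptyset\}$ has some block with empty intersection. One then picks a witnessing tuple from each $\Fcal_D$ with $D \in P_n$ and adds \emph{all} of these functions to $S$; the resulting condition forces $\Gamma^{G \oplus C}(n)\uparrow$, since any putative output $\{\sigma_0,\dots,\sigma_{k-1}\}$ would induce a $k$-partition of $P_n$ with all blocks nonempty. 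The remaining case (every $n$ admits a $k$-partition of $P_n$ with all intersections nonempty, and $C_n \in P_n$) is what yields the $X \oplus C$-computable $k$-enum. This combinatorial partition argument, together with the ability to accumulate auxiliary functions in $S$, is the missing idea in your proposal.
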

\begin{proof}
Let~$b : \N \to \N$ be a $C$-computable function bounding~$f$.
Again, assume that there is no infinite non-decreasing subsequence~$G$ such that~$\Ccal$
has no $G \oplus C$-computable c.b-enum, otherwise we are done.
We will construct the set~$G$ using another variant of Mathias forcing.

Our forcing condition are tuples $(F, X, S)$ where
$F$ is a finite set of integers, $X$ is an infinite set such that $\max F < \min X$,
and $S$ is a finite collection of functions over~$\N \to \N$ dominated by~$b$,
and such that $g(x) \leq g(y)$ for every~$x \in F$, $y \in X$ and~$g \in S$.
We furthermore require that $\Ccal$ has no $X \oplus C$-computable c.b-enum.
A condition $d = (E, Y, T)$ extends $c = (F, X, S)$
if $E \supseteq F$, $Y \subseteq X$, $T \supseteq S$ and 
$E \setminus F$ is a non-decreasing subset of~$X$ for every $g \in S$.
A set~$G$ \emph{satisfies} a condition $(F, X, S)$ if
$F \subseteq G \subseteq F \cup X$ and $G \setminus F$ is non-decreasing for every~$g \in S$.
In particular, every infinite set satisfying the condition $(\emptyset, \omega, \{f\})$
is an infinite non-decreasing sequence for~$f$.

Given a condition~$c = (F, X, S)$, we let $\#(c)$ be the number of functions~$g \in S$
such that~$g$ is not constant over~$X$. 
We now prove that every sufficiently generic filter for this notion of forcing
yields an infinite set.

\begin{lemma}\label{lem:enum-avoidance-ext}
For every condition~$c = (F, X, S)$, there is an extension $d = (E, Y, S)$ of~$c$
such that either~$\#(d) < \#(c)$, or $|E| > |F|$.
\end{lemma}
\begin{proof}
Suppose that~$S = \{g_0, \dots, g_{k-1}\}$.
Pick any~$x \in X$. By iteratively applying strong c.b-enum avoidance
of the infinite pigeonhole principle~\cite{Liu2015Cone},
define a finite sequence $X = X_0 \supseteq X_1 \supseteq \dots \supseteq X_k$
of infinite sets such that for each~$i < k$, $\Ccal$ has no $X_{i+1} \oplus C$-computable c.b-enum
and either there is some~$n < g_i(x)$ such that $g_i(y) = n$ for each~$y \in X_{i+1}$,
or $g_i(y) \geq g_i(x)$ for each~$y \in X_{i+1}$.
If we are in the former case for some~$i < k$, then the condition~$d = (F, X_{i+1}, S)$
is an extension of~$c$ such that~$\#(d) < \#(c)$.
Otherwise, the condition $d = (F \cup \{x\}, X_k, S)$ is the desired extension of~$c$.
\end{proof}

We now prove that $\Ccal$ has no $G \oplus C$-computable c.b-enum for every set $G$ yielded
by a sufficiently generic filter.

\begin{lemma}\label{lem:enum-avoidance-force}
For every condition~$c = (F, X, S)$, every~$k \in \N$ and every Turing functional~$\Gamma$,
there is an extension~$d$ of~$c$ such that either~$\#(d) < \#(c)$, 
or~$d$ forces~$\Gamma^{G \oplus C}$ not to be a valid $k$-enum of~$\Ccal$. 
\end{lemma}
\begin{proof}
Suppose that~$S = \{g_0, \dots, g_{m-1}\}$.
For ease of notation, whenever $\Gamma^{G \oplus C}(n)$ halts,
we will interpret $\Gamma^{G \oplus C}(n)$ as a finite set~$D_n$ of size $k$
such that~$|\sigma| = n$ for every~$\sigma \in D_n$.
For every~$n \in \N$, let~$C_n = \{ \sigma \in 2^n : \Ccal \cap [\sigma] \neq \emptyset \}$.
For every set~$D \subseteq 2^n$,
let~$\Fcal_D$ be the the $\Pi^{0,X \oplus C}_1$ class of all $m$-tuples of functions $\tuple{h_0, \dots, h_{m-1}}$ over~$\N \to \N$
dominated by~$b$, such that~$h_i(y) \geq h_i(x)$ for each~$x \in F$, $y \in X$ and~$i < m$, 
and such that $\Gamma^{(F \cup E) \oplus C}(n) \uparrow$
or $\Gamma^{(F \cup E) \oplus C}(n) \cap D \neq \emptyset$ for every set~$E \subset X$
non-decreasing for every~$h_i$ simultaneously. 
Finally, for each~$n \in \N$, let $P_n = \{D \subseteq 2^n : \Fcal_D \neq \emptyset \}$.
We have three outcomes.

In the first case, $C_n \not \in P_n$ for some~$n \in \N$. In other words, $\Fcal_{C_n} = \emptyset$.
In particular, $\tuple{g_0, \dots, g_{m-1}} \not \in \Fcal_{C_n}$, so there is a finite set~$E \subseteq X$
non-decreasing for every~$g_i$ simultaneously, such that $\Gamma^{(F \cup E) \oplus C}(n) \cap \Ccal = \emptyset$.
Apply strong c.b-enum avoidance of the infinite pigeonhole principle as in Lemma~\ref{lem:enum-avoidance-ext}
to obtain an infinite set~$Y \subseteq X$ such that either~$d = (F, Y, S)$ is an extension of~$c$
satisfying~$\#(d) < \#(c)$, or~$d = (F \cup E, Y, S)$ is a valid extension of~$c$
forcing $[\Gamma^{G \oplus C}(n)] \cap \Ccal = \emptyset$.

In the second case, there is some~$n \in \N$ such that for every $k$-partition $\Vcal_0, \dots, \Vcal_{k-1}$ 
of $P_n$, there is some~$i < k$ such that $\bigcap \Vcal_i = \emptyset$.
For each~$D \in P_n$, pick some~$\langle h^D_0, \dots,\allowbreak h^D_{m-1} \rangle \in \Fcal_D$.
The condition~$d = (F, X, T )$, where $T = S \cup \bigcup_{D \in P_n} \{ h^D_0, \dots, h^D_{m-1} \}$,
is a valid extension of~$c$ forcing $\Gamma^{G \oplus C}(n) \uparrow$.
To see that, suppose that $\Gamma^{G \oplus C}(n) \downarrow = \{\sigma_0, \dots, \sigma_{k-1}\}$,
and let $\Vcal_i = \{D \in P_n : \sigma_i \in D \}$.
We claim that $\Vcal_0, \dots, \Vcal_{k-1}$ forms a $k$-partition of~$P_n$.
Indeed, for any~$D \in P_n$, since~$G$ satisfies $d$, $G \setminus F$ is non-decreasing for 
$h^D_0, \dots, h^D_{m-1}$, so $\{\sigma_0, \dots, \sigma_{k-1}\} \cap D \neq \emptyset$ and $D \in \Vcal_i$ for some~$i < k$.
But then there is some~$i < k$ such that $\bigcap \Vcal_i = \emptyset$,
contradicting $\sigma_i \in \bigcap \Vcal_i$.

In the last case, for every~$n \in \N$, $C_n \in P_n$ and there is a $k$-partition $\Vcal_0, \dots, \Vcal_{k-1}$ 
of~$P_n$ such that $\bigcap \Vcal_i \neq \emptyset$ for each~$i < k$.
In this case, we claim that $\Ccal$ admits an $X \oplus C$-computable $k$-enum, contradicting our assumption.
First note that the set $P_n$ is $X \oplus C$-co-c.e.\ uniformly in~$n$.
Therefore, given $n \in \N$, we can $X \oplus C$-computably find a stage~$s$ and a $k$-partition $\Vcal_0, \dots, \Vcal_{k-1}$ 
of $P_{n,s}$ such that $\bigcap \Vcal_i \neq \emptyset$ for each~$i < k$. Let~$D_n$ be the set
obtained by picking a $\sigma$ in each~$\bigcap \Vcal_i$.
The set~$D_n$ has size~$k$, and $\Ccal \cap [D_n] \neq \emptyset$ since $C_n \in P_{n,s}$.
The sequence $D_0, D_1, \dots$ is an $X \oplus C$-computable $k$-enum of~$\Ccal$.
\end{proof}

Let~$\Fcal = \{c_0, c_1, \dots\}$ be a sufficiently generic filter containing $(\emptyset, \omega, \{f\})$,
where~$c_s = (F_s, X_s, S_s)$. The filter~$\Fcal$ yields a unique set~$G = \bigcup_s F_s$.
By Lemma~\ref{lem:enum-avoidance-ext}, the set~$G$ is infinite,
and by definition of the extension relation, $G$ is non-decreasing for~$f$.
By Lemma~\ref{lem:enum-avoidance-force}, $G \oplus C$ computes no c.b-enum of~$\Ccal$.
This completes the proof of Theorem~\ref{thm:cb-enum-avoidance-cns}.
\end{proof}

\begin{corollary}
$\mathsf{CNS}$ does not imply~$\wwkl$ over~$\rca$.
\end{corollary}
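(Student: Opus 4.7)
The standard route is to construct an $\omega$-model of $\rca + \mathsf{CNS}$ in which $\wwkl$ fails, using Theorem~\ref{thm:cb-enum-avoidance-cns} as the preservation engine.

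First I would fix a $\Pi^0_1$ class $\Ccal \subseteq 2^\N$ of positive measure admitting no computable c.b-enum; the existence of such a class comes from Liu's work~\cite{Liu2015Cone}, where it underpins the separation of $\rt^2_2$ from $\wwkl$. The crucial observation is that any member $Z \in \Ccal$ induces a $Z$-computable $1$-enum of $\Ccal$, namely $D_n = \{Z \uh n\}$, so preserving the property ``$\Ccal$ admits no $X$-computable c.b-enum'' along a sequence of oracles $X$ is at least as strong as preventing those oracles from computing members of $\Ccal$.

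I would then build the model iteratively, enumerating all potential $\mathsf{CNS}$-instances $(C, f)$, where $C$ ranges over finite joins of sets already placed in the model and $f$ is a $C$-computably bounded $\Delta^{0,C}_2$ function given by a Turing functional on $C$. At each stage, I apply Theorem~\ref{thm:cb-enum-avoidance-cns} with oracle $C$ to obtain an infinite non-decreasing subsequence $G$ for $f$ such that $\Ccal$ still has no $(G \oplus C)$-computable c.b-enum, and add $G$ to the model. Let $\Mcal$ be the Turing ideal generated by all sets obtained in this way. By construction every $\mathsf{CNS}$-instance in $\Mcal$ is met, so $\Mcal \models \mathsf{CNS}$, and the invariant ``no set in $\Mcal$ computes a c.b-enum of $\Ccal$'' is preserved throughout the construction.

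Finally, since no element of $\Mcal$ can lie in $\Ccal$ without violating the invariant, the class $\Ccal$, viewed as a $\Pi^0_1$ tree of positive measure, is an instance of $\wwkl$ in $\Mcal$ with no solution in $\Mcal$, and therefore $\Mcal \not\models \wwkl$. The main obstacle is locating the starting class $\Ccal$ with no computable c.b-enum; once this is granted from~\cite{Liu2015Cone}, the bookkeeping of the iterative construction is routine.
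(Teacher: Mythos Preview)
Your proposal is correct and follows essentially the same approach as the paper: fix a $\Pi^0_1$ class of positive measure with no computable c.b-enum (the paper uses the class of Martin-L\"of randoms and cites Liu~\cite{Liu2015Cone}), then iterate Theorem~\ref{thm:cb-enum-avoidance-cns} to build an $\omega$-model of $\mathsf{CNS}$ in which the invariant is preserved, so that no path through the corresponding tree lies in the model. Your additional remark that any $Z \in \Ccal$ yields a $Z$-computable $1$-enum is exactly the observation the paper leaves implicit when concluding that $\Mcal \not\models \wwkl$.
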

\begin{proof}
Let~$T$ be a computable tree of positive measure whose paths are Martin-L\"of randoms.
By Liu~\cite{Liu2015Cone}, $[T]$ has no computable c.b-enum. Iterate Theorem~\ref{thm:cb-enum-avoidance-cns}
to build a model~$\Mcal$ of~$\mathsf{CNS}$ such that~$[T]$ has no $X$-computable c.b-enum
for any~$X \in \Mcal$. In particular, $T \in \Mcal$, but there is no path through~$T$ in~$\Mcal$,
so~$\Mcal \not \models \wwkl$.
\end{proof}

The statement $\mathsf{CNS}$ enjoys two important properties.
First, any infinite subset of a non-decreasing sequence is itself non-decreasing.
Second, for any function $f : \N \to \N$ and any infinite set~$X \subseteq \N$,
one can find an infinite non-decreasing subsequence~$Y \subseteq X$.
These two features are shared with a whole family of statements coming from Ramsey's theory.
Recall that Ramsey's theorem for $n$-tuples and $k$ colors
($\rt^n_k$) asserts the existence, for every coloring $f : [\N]^n \to k$,
of an infinite \emph{homogeneous set}, that is, a set~$H \subseteq \N$
such that~$[H]^n$ is monochromatic. A coloring $f : [\N]^2 \to k$
is \emph{stable} if $\lim_s f(x, s)$ exists for every~$x \in \N$. 
$\srt^2_k$ is the restriction of $\rt^2_k$ to stable colorings.
Any stable coloring $f : [\N]^2 \to k$ can be seen as the $\Delta^0_2$
approximation of the computably bounded function $\tilde{f} : \N \to \N$
defined by $\tilde{f}(x) = \lim_s f(x, s)$. Moreover, any infinite
non-decreasing subsequence for~$\tilde{f}$ is, up to finite changes,
homogeneous for~$\tilde{f}$, and can be $f \oplus H$-computably thinned out
to obtain an infinite $f$-homogeneous set. 
By Chong, Lempp and Yang~\cite{Chong2010role}, this argument can be formalized in $\rca$,
therefore $\mathsf{CNS}$ implies $\srt^2_k$ over~$\rca$ for every standard~$k \in \N$. 
We will prove in the next section that the converse does not hold. For now, we show that $\mathsf{LNS}$
does not imply $\mathsf{CNS}$ over~$\rca$ using the notion
of preservation of hyperimmunity.

A function $f : \N \to \N$ is \emph{hyperimmune} if it is not dominated by any computable function.
An infinite set is hyperimmune if its principal function is hyperimmune, where the \emph{principal function}
of a set~$X = \{x_0 < x_1 < \dots \}$ is defined by $p_X(n) = x_n$.
A problem~$\Psf$ \emph{admits preservation of hyperimmunity}
if for each set~$C$, each countable collection of $C$-hyperimmune sets~$A_0, A_1, \dots$, 
and each $\Psf$-instance $X \leq_T Z$,
there exists a solution $Y$ to~$X$ such that the $A$'s are $Y \oplus C$-hyperimmune.
The author proved~\cite{Patey2015Iterativea} that weak statements such as the stable version of the ascending descending
principle ($\sads$) do not admit preservation of hyperimmunity, while the Erd\H{o}s-Moser theorem ($\emo$)
does. We shall use this notion to separate $\mathsf{LNS}$ from $\sads$ over~$\rca$.
In particular, this will separate~$\mathsf{LNS}$ from~$\mathsf{CNS}$ since~$\mathsf{CNS}$
implies~$\srt^2_2$, which itself implies~$\sads$ over~$\rca$ (see~\cite{Hirschfeldt2007Combinatorial}).

We have seen that for every computable instance~$f : \N \times \N \to \N$ of~$\mathsf{LNS}$,
its limit function~$\tilde{f}$ is computably bounded, and that this bounding feature is sufficient to obtain cone avoidance.
We will now exploit another property enjoyed by~$\tilde{f}$ to prove that $\mathsf{LNS}$ admits preservation of hyperimmunity.
A function $g : \N \to \N$ is \emph{eventually increasing} if each~$y \in \N$
has finitely many predecessors by~$g$. For every computable instance~$f : \N \times \N \to \N$
of~$\mathsf{LNS}$ with no computable solution, its limit function~$\tilde{f}$ must be eventually increasing,
otherwise the set~$\{x : \tilde{f}(x, s) = y \}$ would be an infinite, computable
non-decreasing subsequence for~$\tilde{f}$ for the least $y$ with
infinitely many predecessors by~$\tilde{f}$. Let~$\mathsf{ICNS}$ be the restriction of~$\mathsf{CNS}$
to eventually increasing functions. We will now prove that~$\mathsf{ICNS}$,
and therefore~$\mathsf{LNS}$, admits preservation of hyperimmunity.

\begin{theorem}\label{thm:lns-preservation-hyperimmunity}
Fix a set~$C$ and a countable sequence~$A_0, A_1, \dots$ of $C$-hyperimmune sets.
For every $C$-computably bounded, eventually increasing function~$f : \N \to \N$, there is an infinite
non-decreasing subsequence~$G$ such that the~$A$'s are $G \oplus C$-hyperimmune.
\end{theorem}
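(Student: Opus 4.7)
My proof follows the template of Theorem \ref{thm:cb-enum-avoidance-cns}, using a variant of Mathias forcing whose conditions are triples $(F, X, S)$. Here $F$ is a finite set of integers, $X$ is an infinite set with $\max F < \min X$, and $S$ is a finite collection of functions dominated by $b$ (with $f \in S$) all satisfying $g(x) \leq g(y)$ for every $x \in F$, $y \in X$; we further require the $A_i$'s to be $X \oplus C$-hyperimmune. Extensions $(E, Y, T)$ of $(F, X, S)$ are as in Theorem \ref{thm:cb-enum-avoidance-cns}: $E \supseteq F$, $Y \subseteq X$, $T \supseteq S$, and $E \setminus F$ is non-decreasing for every $g \in S$. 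The measure $\#(c)$ is the number of functions in $S$ that are not constant on $X$.

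The extension lemma parallels Lemma \ref{lem:enum-avoidance-ext}: for any condition $c$ there is an extension $d$ with either $\#(d) < \#(c)$ or $|E| > |F|$. The proof proceeds by iteratively applying strong preservation of hyperimmunity of the infinite pigeonhole principle (a prior result of the author) in place of strong c.b-enum avoidance. The eventually increasing hypothesis on $f$ guarantees that, while $\#$-reduction can still happen via other $g \in S$, the $f$-coordinate always falls into the ``increasing'' branch of pigeonhole, so $f$ itself never obstructs the growth of $F$.

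The forcing lemma mirrors Lemma \ref{lem:enum-avoidance-force}: for every condition $c$, every index $i$, and every Turing functional $\Gamma$, there is an extension $d$ of $c$ such that either $\#(d) < \#(c)$ or $d$ forces $\Gamma^{G \oplus C}$ not to dominate the principal function $p_{A_i}$. For each $n$ and each $v$, define the $\Pi^{0, X \oplus C}_1$ class $\Fcal_{n,v}$ of $m$-tuples $\langle h_0, \dots, h_{m-1} \rangle$ of functions dominated by $b$ with $h_i(y) \geq h_i(x)$ for every $x \in F$, $y \in X$, $i < m$, and such that for every finite $E \subseteq X$ simultaneously non-decreasing for every $h_i$, $\Gamma^{(F \cup E) \oplus C}(n) \uparrow$ or $\Gamma^{(F \cup E) \oplus C}(n) < v$. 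I would then perform a three-case analysis analogous to Lemma \ref{lem:enum-avoidance-force}: a first case where $\Fcal_{n,v}$ is nonempty for some suitable $(n, v)$ and a hyperimmunity-preserving witness is adjoined to $S$ via a preservation-of-hyperimmunity basis theorem for $\Pi^0_1$ classes; an intermediate case reducing $\#(c)$; and a final case that extracts an $X \oplus C$-computable function $h$ with $\Gamma^{G \oplus C}(n) \leq h(n)$ for every $n$ whenever $\Gamma^{G \oplus C}(n) \downarrow$, so that $h$ would dominate $p_{A_i}$, contradicting the $X \oplus C$-hyperimmunity of $A_i$.

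The main obstacle is the extraction of the $X \oplus C$-computable dominator in the final case, and this is precisely where the eventually increasing assumption on $f$ is indispensable. For a computably bounded function that is merely $\Delta^0_2$, the final case could instead produce only an $X \oplus C$-partial function, breaking the contradiction; this discrepancy is in fact what prevents the analogous result for $\mathsf{CNS}$. Eventually increasing ensures that for each $n$ one can $X \oplus C$-computably identify from the $\Pi^0_1$ data a finite $v$ witnessing $\Fcal_{n, v} \neq \emptyset$, by leveraging the fact that under an eventually increasing function, the common non-decreasing subsequences are sparse enough that any enumeration of emptiness-witnesses for the classes $\Fcal_{n, 0}, \Fcal_{n, 1}, \dots$ must stabilize, yielding a total $h$.
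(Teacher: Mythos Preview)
Your proposal has a genuine gap. You invoke ``strong preservation of hyperimmunity of the infinite pigeonhole principle'' as a prior result, but this statement is \emph{false}. Indeed, $\srt^2_2$ (equivalently $D^2_2$) does not admit preservation of hyperimmunity, and a computable instance of $D^2_2$ is nothing but a $\Delta^0_2$ approximation of an arbitrary $\Delta^0_2$ $2$-partition of $\N$. Hence there exist a $2$-partition $A \cup \overline{A}$ of $\N$ and a hyperimmune set $B$ such that every infinite $H \subseteq A$ or $H \subseteq \overline{A}$ computes a function dominating $p_B$. In your extension lemma the functions $g \in S \setminus \{f\}$ are arbitrary $b$-bounded functions picked from $\Pi^0_1$ classes during earlier forcing steps, with no reason to be eventually increasing; applying pigeonhole to such a $g$ is exactly the situation in which strong preservation fails. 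This is not a cosmetic issue: it is precisely why $\mathsf{CNS}$ itself does \emph{not} admit preservation of hyperimmunity, which is the whole point of isolating $\mathsf{ICNS}$.

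The paper's proof works quite differently, and the eventually increasing hypothesis enters in the opposite place from where you put it. The paper uses plain Mathias conditions $(F, X)$ with no $S$-component at all. The extension lemma is then trivial: pick any $x \in X$ and set $Y = \{y \in X : y > x \wedge f(y) \geq f(x)\}$; since $f$ is eventually increasing, $X \setminus Y$ is finite, so the $A$'s remain $Y \oplus C$-hyperimmune automatically. The forcing lemma is a straight two-case split on whether a certain partial $X \oplus C$-computable function $h$ is total. If $h$ is total, hyperimmunity of $A_i$ gives an $n$ with $h(n) < p_{A_i}(n)$ and a finite $E$ witnessing $\Gamma^{(F\cup E)\oplus C}(n)\downarrow < p_{A_i}(n)$; one then passes to a cofinite subset of $X$ as above. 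If $h(n)\uparrow$ for some $n$, one uses preservation of hyperimmunity for $\wkl$ (which \emph{does} hold) to select a single $b$-bounded $g$ from the relevant $\Pi^{0,X\oplus C}_1$ class, and then thins $X$ once and for all to an infinite $Y \subseteq X$ non-decreasing for $g$; the condition $(F, Y)$ forces $\Gamma^{G\oplus C}(n)\uparrow$. No accumulation of side functions, no $\#$-counter, and no appeal to pigeonhole are needed. Your attempt to locate the role of ``eventually increasing'' inside the forcing lemma, to make some enumeration of emptiness witnesses ``stabilize'', does not correspond to any actual mechanism: the classes $\Fcal_{n,v}$ you define quantify over all $b$-bounded tuples and are insensitive to whether $f$ happens to be eventually increasing.
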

\begin{proof}
Let~$b : \N \to \N$ be a $C$-computable function bounding~$f$.
As usual, assume that there is no infinite set~$G$ non-decreasing for~$f$
such that the~$A$'s are $G \oplus C$-hyperimmune, otherwise we are done.
We will build the set~$G$ by a variant of Mathias forcing.

A condition is a tuple~$(F, X)$ where~$F$ is a finite set of integers
non-decreasing for~$f$, $X$ is an infinite set such that~$\max F < \min X$,
$f(x) \leq f(y)$ for every~$x \in F$, $y \in X$, and the $A$'s are $X \oplus C$-hyperimmune.
The notions of condition extension and of set satisfaction inherit from Mathias forcing.
We again prove that every sufficiently generic filter for this notion of forcing
yields an infinite set.

\begin{lemma}\label{lem:hyp-preservation-ext}
For every condition~$c = (F, X)$, there is an extension $d = (E, Y)$
such that~$|E| > |F|$.
\end{lemma}
\begin{proof}
Pick any~$x \in X$ and let~$Y = \{ y \in X : y > x \wedge f(y) \geq f(x) \}$.
The set~$Y$ is obtained from~$X$ by removing finitely many elements
since~$f$ is eventually increasing,
so the~$A$'s are $Y \oplus C$-hyperimmune.
The condition~$(F \cup \{x\}, Y)$ is the desired extension of~$c$.
\end{proof}

Next, we prove that every sufficiently generic filter yields
a set~$G$ such that the~$A$'s are $G \oplus C$-hyperimmune.

\begin{lemma}\label{lem:hyp-preservation-force}
For every condition~$c = (F, X)$, every Turing functional~$\Gamma$
and every~$i \in \N$, there is an extension forcing~$\Gamma^{G \oplus C}$ not to dominate $p_{A_i}$.
\end{lemma}
\begin{proof}
Let~$h$ be the partial $X \oplus C$-computable function which on input~$x$
searches for a finite set of integers~$U$ such that for every function $g : \N \to \N$
bounded by~$b$, there is a finite set~$E \subseteq X$ non-decreasing for~$g$
such that~$\Phi^{(F \cup E) \oplus C}_e(x) \downarrow \in U$. If such a set~$U$ is found,
$f(x) = \max U$, otherwise~$f(x) \uparrow$. We have two cases.

Case 1: $h$ is total. By $X \oplus C$-hyperimmunity of~$A_i$, there
	is some~$x$ such that~$h(x) < p_{A_i}(x)$. Let~$U$ be the finite set witnessing~$h(x) \downarrow$.
	In particular, taking~$g = f$, there is a finite set~$E \subseteq X$
	non-decreasing for~$f$ such that~$\Phi^{(F \cup E) \oplus C}_e(x) \downarrow \in U$.
	By removing finitely many elements from~$X$, we obtain a set~$Y$ such that 
	$(F \cup E, Y)$ is a valid extension of~$c$ forcing~$\Phi^{G \oplus C}_e(x) \downarrow < p_{A_i}(x)$.

Case 2: there is some~$x$ such that~$h(x) \uparrow$. Let~$\Ccal$
	be the~$\Pi^{0,X \oplus C}_1$ class of functions $g : \N \to \N$ bounded by~$b$
	such that for every finite set~$E \subseteq X$ non-decreasing for~$g$,
	$\Phi^{(F \cup E) \oplus C}_e(x) \uparrow$. 
	By compactness, $\Ccal \neq \emptyset$, so
	by preservation of hyperimmunity of~$\wkl$, there exists some $g \in \Ccal$
	such that the~$A$'s are $g \oplus X \oplus C$-hyperimmune.
	We can $g \oplus X$-computably thin out the set~$X$ to obtain an infinite set~$Y \subseteq X$
	non-decreasing for~$g$. The condition~$(F, Y)$ is an extension of~$c$
	forcing~$\Phi^{G \oplus C}_e(x) \uparrow$.
\end{proof}

Let~$\Fcal = \{c_0, c_1, \dots\}$ be a sufficiently generic filter containing $(\emptyset, \omega)$,
where~$c_s = (F_s, X_s)$. The filter~$\Fcal$ yields a unique set~$G = \bigcup_s F_s$.
By Lemma~\ref{lem:hyp-preservation-ext}, the set~$G$ is infinite,
and by definition of a condition, $G$ is non-decreasing for~$f$.
By Lemma~\ref{lem:hyp-preservation-force}, the~$A$'s are $G \oplus C$-hyperimmune.
This completes the proof of Theorem~\ref{thm:lns-preservation-hyperimmunity}.
\end{proof}

\begin{corollary}
$\mathsf{ICNS} \wedge \emo \wedge \wkl$ does not imply $\sads$ over~$\rca$.
\end{corollary}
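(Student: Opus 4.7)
The plan is to separate the principles via an $\omega$-model argument based on preservation of hyperimmunity. First I would collect the relevant preservation results: $\mathsf{ICNS}$ admits preservation of hyperimmunity by Theorem~\ref{thm:lns-preservation-hyperimmunity}; $\wkl$ admits it by the classical result tacitly invoked in the proof of that theorem; and $\emo$ admits it by Patey~\cite{Patey2015Iterativea}. Conversely, the same reference shows that $\sads$ does not admit this property, so I would fix a hyperimmune set~$A$ and a computable linear order~$L$ (an $\sads$-instance) such that for every infinite ascending or descending sequence~$H$ through~$L$, the principal function~$p_A$ is dominated by some $H$-computable function.

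Next I would construct, by stages, an $\omega$-model $\Mcal = \bigcup_s \Mcal_s$ of $\rca + \mathsf{ICNS} + \emo + \wkl$ such that $A$ is $X$-hyperimmune for every $X \in \Mcal$. Starting from $\Mcal_0$ consisting of the computable sets, I would dovetail through all instances of the three principles that are computable from sets already added. At stage~$s$, letting $C_s$ be the join of the sets in $\Mcal_s$ (with $A$ still $C_s$-hyperimmune by induction), I pick the next unresolved instance of one of the three principles coded by $C_s$ and apply the corresponding preservation theorem with base $C_s$ and singleton sequence $\{A\}$ to extract a solution~$Y$ with~$A$ still $(Y \oplus C_s)$-hyperimmune. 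I then close $\Mcal_{s+1}$ under Turing reducibility and join. The resulting $\Mcal$ satisfies $\mathsf{ICNS} \wedge \emo \wedge \wkl$, and~$A$ remains $X$-hyperimmune for every $X \in \Mcal$.

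To conclude, since $L$ is computable it belongs to $\Mcal$, but any solution $H \in \Mcal$ to~$L$ would simultaneously leave~$A$ hyperimmune relative to~$H$ (by construction) and dominate~$p_A$ via an $H$-computable function (by the choice of~$L$), a contradiction. Hence $\Mcal \not\models \sads$, yielding the desired separation over~$\rca$.

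The main obstacle I anticipate is verifying that each of the three preservation theorems is formulated uniformly relative to an arbitrary base set~$C$, so that the growing second-order part of the model can serve as the base at successive stages of the iteration. This is visible by inspection of Theorem~\ref{thm:lns-preservation-hyperimmunity} and of the corresponding statements in~\cite{Patey2015Iterativea}, but it is what makes the iterative construction actually compose and delivers a single hyperimmune set~$A$ preserved throughout~$\Mcal$.
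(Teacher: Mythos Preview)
Your proposal is correct and follows essentially the same approach as the paper: both argue that $\mathsf{ICNS}$, $\emo$, and $\wkl$ admit preservation of hyperimmunity (via Theorem~\ref{thm:lns-preservation-hyperimmunity}, \cite{Patey2015Iterativea}, and the hyperimmune-free basis theorem respectively) while $\sads$ does not, and then build an $\omega$-model by iteration. The paper simply states that ``one can therefore build an $\omega$-model'' without spelling out the iterative construction you describe, so your version is more detailed but not materially different.
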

\begin{proof}
By Theorem~\ref{thm:lns-preservation-hyperimmunity}, by~\cite{Patey2015Iterativea} and by the
hyperimmune-free basis theorem~\cite{Jockusch197201}, $\mathsf{ICNS}$, $\emo$ and~$\wkl$
admit preservation of hyperimmunity, while~$\sads$ does not.
One can therefore build an $\omega$-model of~$\mathsf{ICNS} \wedge \emo \wedge \wkl$
in which~$\sads$ does not hold.
\end{proof}

\section{The strength of non-decreasing subsequences}

We continue our study of the strength of the non-decreasing statements
by considering their ability to compute functions not dominated
by some classes of functions. Since stable Ramsey's theorem for pairs
is computably reducible to $\mathsf{CNS}$, there is a computable instance of~$\mathsf{CNS}$
whose solutions are all of hyperimmune degree. We now prove that the same property holds
for~$\mathsf{LNS}$.

\begin{theorem}
There is a computable function~$f : \N \times \N \to \N$
such that $f(x, s+1) \leq f(x, s)$ for every~$x, s \in \N$
and such that every infinite limit non-decreasing subsequence for~$f$
is hyperimmune.
\end{theorem}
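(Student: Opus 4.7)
My plan is to reduce the theorem to a classical fact about hypersimple sets. I will start from a hypersimple c.e.\ set~$A$, given by Dekker's classical construction: a set that is c.e., coinfinite, and whose complement~$\N \setminus A$ is hyperimmune. Since a c.e.\ set is computable if and only if its principal function is dominated by a computable function, the non-computable set~$A$ is itself hyperimmune, so both~$A$ and~$\N \setminus A$ are hyperimmune sets.

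Fixing an effective enumeration $A = \bigcup_s A_s$ with $A_s \subseteq A_{s+1}$, I will set $f(x, s) = 0$ if $x \in A_s$ and $f(x, s) = 1$ otherwise. This~$f$ is computable, $\{0,1\}$-valued, and satisfies $f(x, s+1) \leq f(x, s)$ since elements only enter the~$A_s$ over time. The limit function~$\tilde{f}$ is the characteristic function of~$\N \setminus A$, so~$\tilde{f}^{-1}(0) = A$ and $\tilde{f}^{-1}(1) = \N \setminus A$.

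For the verification, suppose $X = \{x_0 < x_1 < \ldots\}$ is an infinite limit non-decreasing subsequence of~$f$. Because~$\tilde{f}$ takes only two values and is non-decreasing along~$X$, there is a threshold $N \in \N \cup \{\infty\}$ with $x_i \in A$ for $i < N$ and $x_i \in \N \setminus A$ for $i \geq N$; hence~$X$ has a cofinite subset contained entirely in one of~$A$ or~$\N \setminus A$. Any subset~$B$ of a hyperimmune set~$Y$ is itself hyperimmune (since $p_B(n) \geq p_Y(n)$ pointwise), so that cofinite subset of~$X$ is hyperimmune, and adding back finitely many elements only shifts the principal function by a constant and therefore preserves non-domination by computable functions. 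Consequently~$X$ is hyperimmune.

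The main obstacle, if any, is spotting that hypersimple sets are exactly the right classical object: once invoked, they package all the combinatorial content, and the rest is a short unwinding of definitions. A plausible fallback, were this route blocked, would be a direct priority construction building~$f$ stage by stage so as to carve out strictly decreasing intervals diagonalizing against each computable candidate dominating function; but the bookkeeping to keep the initial bounds $f(\cdot, 0)$ large enough to accommodate every strategy seems rather delicate in that approach.
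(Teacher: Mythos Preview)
Your proof has a genuine gap: the claim that the hypersimple set~$A$ is itself hyperimmune is false. In fact, \emph{no} infinite c.e.\ set is hyperimmune. Given any infinite c.e.\ set~$A$ with a one-to-one computable enumeration $a_0, a_1, \ldots$, the set $B = \{a_{n_0} < a_{n_1} < \cdots\}$ obtained by keeping only the successive fresh maxima is an infinite computable subset of~$A$; since $B \subseteq A$ we have $p_B \geq p_A$ pointwise, and $p_B$ is computable, so $p_A$ is computably dominated. The ``fact'' you invoke (that a c.e.\ set is computable iff its principal function is computably dominated) is therefore false in the direction you need; you may be thinking of the analogous statement for the complement, or for the modulus of convergence of the enumeration.

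This is not a repairable detail: it breaks the construction outright. Since~$A$ is infinite and c.e., it has an infinite computable subset~$B$, and~$\tilde f$ is constantly~$0$ on~$B \subseteq A$, so~$B$ is an infinite limit non-decreasing subsequence for your~$f$ that is \emph{not} hyperimmune. More generally, a $\{0,1\}$-valued instance cannot work: the monotonicity $f(x,s+1) \leq f(x,s)$ forces $\tilde f^{-1}(0)$ to be c.e., so if it is infinite it contains a non-hyperimmune solution, while if it is finite then $\tilde f^{-1}(1)$ is cofinite and already a non-hyperimmune solution.

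The paper proceeds exactly along the lines of your ``fallback'': a finite-injury priority construction in which the strategy for~$\Rcal_e$ waits for $\Phi_e(x_0)\downarrow = x_1$ and $\Phi_e(x_1)\downarrow = x_2$ on a fresh interval, then commits $\tilde f$ to a strictly smaller value on $[x_1,x_2)$ than on $[x_0,x_1)$, forcing any limit non-decreasing subsequence to miss one of the two intervals and hence to have its principal function exceed~$\Phi_e$ somewhere. The values used are the priority indices themselves, which keeps the bookkeeping you worried about (making $f(\cdot,0)$ large enough) entirely straightforward: one simply sets $f(s,t) = s$ for $t \leq s$, so that when~$\Rcal_e$ finally acts after all higher-priority strategies have settled, the fresh interval it finds carries values strictly above~$e$ and can safely be lowered to~$e$.
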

\begin{proof}
We will construct the function~$f$ so that $p_H$ is hyperimmune
for every infinite limit non-decreasing subsequence~$H$ for~$f$.
We want to satisfy the following requirements for every~$e \in \N$.
\begin{quote}
$\Rcal_e$ : If $\Phi_e$ is total and increasing, then 
$\Phi_e(x_0) \downarrow = x_1$ and~$\Phi_e(x_1) \downarrow = x_2$
for some $x_0 < x_1 < x_2 \in \N$ such that
$\lim_s f(x, s) > \lim_s f(y, s)$ for each~$x \in [x_0, x_1)$ and~$y \in [x_1, x_2)$.
\end{quote}
Indeed, given an infinite limit non-decreasing subsequence~$H$ for~$f$
let~$\Phi_e$ be any computable increasing function. By~$\Rcal_e$,
either $H \cap [x_0, x_1) = \emptyset$ or~$H \cap [x_1, x_2) = \emptyset$.
In the former case, $p_H(x_0) \geq x_1 = \Phi_e(x_0)$, while
in the latter case $p_H(x_1)  \geq x_2 = \Phi_e(x_1)$.

The overall construction is a finite injury priority argument.
The local strategy for~$\Rcal_e$ \emph{requires attention} at stage~$s$
if $\Phi_e(x_0) \downarrow = x_1$ and~$\Phi_e(x_1) \downarrow = x_2$
for some~$x_0 < x_1 < x_2 < s$ such that~$f(x, s) > e$ for each~$x \in [x_0, x_2)$
and such that no value in~$[x_0, x_2)$ is restrained by a strategy of higher priority.
The strategy for~$\Rcal_e$ commits $f(x, t)$ to be equal to~$e$ for every~$x \in [x_1, x_2)$ and any~$t \geq s$.
It then puts restrains on every value in~$[x_0, x_2)$ and is declared \emph{satisfied}.
If at a later stage, some strategy of higher priority restrains some value in~$[x_0, x_2)$, then
the strategy for~$\Rcal_e$ is \emph{injured} and starts over, releasing all its restrains.

The global construction works as follows.
At stage $0$, $f$ is the empty function.
Suppose that at stage~$s$, the function~$f$ is defined over~$[0, s)^2$.
If some strategy requires attention, then pick the one of highest priority
and run it.
In any case, set~$f(x, s) = e$ for every strategy~$\Rcal_e$ which has committed
such an assignment. Then set~$f(x, s) = f(x, s-1)$ for every~$x < s$ which has not been assigned yet,
and~$f(s, t) = s$ for every~$t \leq s$.
Then go to the next stage. This finishes the construction. We now turn to the verification.

First notice that each strategy acts finitely often,
and therefore that each strategy is injured finitely many times.
Moreover, notice that $f(x, s+1) \leq f(x, s)$ for every~$x, s \in \N$
since when~$f(x, s+1) \neq f(x, s)$, this is caused by a strategy 
which made its value decrease.
We claim that each strategy~$\Rcal_e$ is eventually satisfied.
To see that, let~$\Phi_e$ be a total increasing function
and let~$s_0 > e$ be a stage after which no strategy of higher priority ever acts.
By construction, $f(x, s) > e$ for every~$x, s \geq s_0$.
Therefore at some later stage~$s_1$, there will be some~$x_0 < x_1 < x_2 < s_1$
such that~$\Phi_e(x_0) \downarrow = x_1$ and~$\Phi_e(x_1) \downarrow = x_2$.
In particular, $f(x, s) > e$ for each~$x \in [x_0, x_2)$, so the strategy for~$\Rcal_e$
will require attention and will be satisfied since no strategy of higher priority acts.
This completes the verification.
\end{proof}

We will now prove that $\rt^2_2 \wedge \wkl$
does not imply~$\mathsf{CNS}$ over~$\rca$ using the notion of hypersurjectivity.
A formula $\varphi(U)$, where~$U$ is a finite coded set parameter,
is \emph{essential} if for every~$x \in \N$, there is some finite set~$A > x$
such that~$\varphi(A)$ holds. Given a set~$C$ and an infinite set~$L \subseteq \N$, a 
function~$f : \N \to \N$ is \emph{$C$-hypersurjective for~$L$}
if for every essential $\Sigma^{0,C}_1$ formula~$\varphi(U)$
and every~$y \in L$, $f(A) = \{y\}$ for some finite set~$A$ such that $\varphi(A)$ holds.
We say that~$f$ is $C$-hypersurjective if it is $C$-hypersurjective for some infinite set~$L \subseteq \N$.
A problem~$\Psf$ \emph{admits preservation of hypersurjectivity}
if for each set~$C$, each function~$f : \N \to \N$
which is $C$-hypersurjective, and each $\Psf$-instance~$X \leq_T C$,
there exists a solution~$Y$ to~$X$ and  such that~$f$ is $Y \oplus C$-hypersurjective.

\begin{theorem}
$\mathsf{CNS}$ does not admit preservation of hypersurjectivity.
\end{theorem}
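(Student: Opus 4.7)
The plan is to build, by a finite-extension construction, a computable $\Delta^0_2$, computably bounded function $h : \N \to \N$ serving as a $\mathsf{CNS}$-instance together with a hypersurjective function $f : \N \to \N$ (with $C = \emptyset$), in such a way that no infinite non-decreasing subsequence $G$ of $h$ leaves $f$ hypersurjective at level $G$. Since preservation of hypersurjectivity is a universal statement over all $\mathsf{CNS}$-instances, exhibiting one such pair $(h, f)$ suffices.

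The combinatorial idea is to reserve an infinite witness set $L \subseteq \N$ and arrange $f$ so that $f^{-1}(y)$ is essentially dense for each $y \in L$, guaranteeing ground-level hypersurjectivity for $L$; simultaneously, I would design $h$ so that every infinite non-decreasing subsequence $G$ computes an infinite set $H_G$ which is almost disjoint from $f^{-1}(y)$ for all but finitely many $y \in L$. Once such an $H_G$ is at hand, the $\Sigma^{0,G}_1$ formula asserting ``$U$ is a finite subset of $H_G$'' is essential, and for any candidate infinite witness set $L'$ we can locate some $y \in L'$ (at least for cofinite choices) with $f^{-1}(y)$ disjoint from a tail of $H_G$, yielding the required failure of $G$-hypersurjectivity. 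The construction alternates between positive steps, which extend $f$ to ensure each essential $\Sigma^0_1$ formula has a witness in some $f^{-1}(y)$ with $y \in L$, and negative steps, which sculpt the values of the limit function $\tilde{h}$ so as to force any non-decreasing subsequence to leave each $f^{-1}(y)$ after finitely many elements.

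The main obstacle is the tension between these two families of requirements: positive steps demand that $f^{-1}(y)$ be rich in finite witnesses for every essential $\Sigma^0_1$ formula, while negative steps demand that $h$ be sculpted so as to forbid non-decreasing subsequences from lingering on $f^{-1}(y)$. I expect the resolution to sequester the positive witnesses inside short monotone segments of $\tilde{h}$ within each $f^{-1}(y)$, interleaved with longer segments on which $\tilde{h}$ strictly decreases; any non-decreasing subsequence can enter at most one of the short monotone segments per block before being ejected, so the positive essential witnesses still sit inside $f^{-1}(y)$ while every non-decreasing $G$ is automatically almost disjoint from $f^{-1}(y)$. A standard priority ordering of positive and negative requirements, together with the usual bookkeeping to enumerate all Turing functionals and all essential $\Sigma^0_1$ formulas, then completes the argument; the subtle point will be showing that the sparsity produced by the decreasing segments really does let $G$ compute the destroying set $H_G$ uniformly, independently of which non-decreasing subsequence $G$ is chosen.
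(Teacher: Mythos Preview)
Your plan introduces an unnecessary complication that the paper avoids entirely: you build two separate objects, a hypersurjective $f$ and a $\mathsf{CNS}$-instance $h$, and then try to coordinate them via a priority argument so that non-decreasing subsequences of $\tilde h$ are eventually forced out of each $f^{-1}(y)$. The sketched mechanism --- short monotone teeth of $\tilde h$ inside $f^{-1}(y)$ separated by decreasing stretches --- is not obviously sound as stated (a non-decreasing subsequence can hop between teeth whenever a later tooth reaches at least the height already attained, so you would need the teeth inside each $f^{-1}(y)$ to have strictly descending maxima, and you never argue that this is compatible with the positive requirements spread across all $y$ simultaneously), and in any case none of this is actually carried out. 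As written, the proposal is a plan for a construction, not a proof.

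The paper sidesteps the whole coordination problem by taking the hypersurjective function and the $\mathsf{CNS}$-instance to be \emph{the same object}. One builds a single $\Delta^0_2$ function $f$ with $f(x)\le x$ (hence computably bounded) that is $\emptyset$-hypersurjective for $L=\N$; this is a straightforward finite-extension construction relative to $\emptyset'$. The point is then purely combinatorial: if $H$ is infinite and non-decreasing for $f$, then $f\uh H$ is non-decreasing, and for any infinite $L'$ one argues directly. Letting $y=\min L'$, either $f$ is bounded by $y$ on $H$, in which case the essential formula ``$U$ is a nonempty finite subset of $H$'' has no witness $A$ with $f(A)=\{z\}$ for the second element $z$ of $L'$; or some $x\in H$ has $f(x)>y$, in which case ``$U$ is a nonempty finite subset of $H\setminus[0,x]$'' has no witness $A$ with $f(A)=\{y\}$, since non-decreasingness forces $f(z)\ge f(x)>y$ for all $z\in H$ beyond $x$. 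No priority, no second function, no $H_G$ to compute.

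In short: the missed idea is that a non-decreasing subsequence for $f$ already carries the structure needed to kill $f$'s hypersurjectivity, so there is no reason to manufacture a separate instance $h$.
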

\begin{proof}
We will build a $\Delta^0_2$ function~$f : \N \to \N$ hypersurjective for~$\N$,
such that~$f(x) \leq x$ for every~$x \in \N$.
We first claim that for every infinite set~$H$
non-decreasing for~$f$ and every infinite set~$L \subseteq \N$, $f$ is not $H$-hypersurjective for~$L$. 
Therefore, $f$ is a computable instance of~$\mathsf{CNS}$
whose solutions do not preserve its own hypersurjectivity.

Suppose for the sake of contradiction that $f$ is $H$-hypersurjective for some infinite set~$L \subseteq \N$.
Let~$y$ be the first element of~$L$. We have two cases.
First, suppose that $f(x) \leq y$ for every~$x \in H$. Let~$\varphi(U)$
be the $\Sigma^{0,H}_1$ formula which holds if~$U$ is a non-empty subset of~$H$. 
The formula~$\varphi(U)$ is essential since~$H$ is infinite.
However, let~$z$ be the second element of~$L$. There is no finite set~$A$
such that~$f(A) = \{z\}$ and $\varphi(A)$, otherwise there would be some~$x \in A \subseteq H$
such that~$f(x) = z > y$. This contradicts $H$-hypersurjectivity of~$f$ for~$L$.
Second, suppose that there is some~$x \in H$ such that~$f(x) > y$.
Let~$\psi(U)$ be the $\Sigma^{0,H}_1$ formula which holds if~$U$ is a non-empty subset of~$H \setminus [0, x]$. 
The formula~$\psi(U)$ is again essential since~$H$ is infinite. 
However, if there is a finite set~$A$ such that~$f(A) = \{y\}$ and~$\psi(A)$ holds, then
there is some~$z \in A \subseteq H \setminus [0, x]$ such that~$f(z) = y$.
In particular, $x < z$ and~$f(x) > f(z)$ which contradicts the fact that $H$ is non-decreasing for~$f$.
Therefore $f$ is not $H$-hypersurjective.

We now build the function~$f : \N \to \N$ by the finite extension method
in a $\Delta^0_2$ construction. Fix an enumeration
$\varphi_0(U), \varphi_1(U), \dots$ of all $\Sigma^0_1$ formulas.
Start at stage~$0$ with the empty function~$f$.
Suppose that at stage~$s = \tuple{y, e}$, the function~$f$ is defined over
some domain~$[0, m)$. Decide in $\emptyset'$ whether there is a finite set~$A \geq m$ such that~$\varphi_e(A)$ holds.
If so, set~$f(x) = y$ for every~$x \in [m, \max A]$, otherwise set~$f(m) = 0$.
In both case, go to the next stage. This completes the construction.
\end{proof}

Before proving that~$\rt^2_2$ admits preservation of hypersurjectivity,
we first need to prove that so does $\wkl$ for any fixed~$L \subseteq \N$. 
Indeed, the latter will be used in the proof of the former.
The proof of the following theorem is a slight modification of the proof of Theorem~14 in~\cite{Patey2015Iterativea}.

\begin{theorem}\label{thm:wkl-preservation-hypersurjectivity}
$\wkl$ admits preservation of hypersurjectivity for any fixed~$L$.
\end{theorem}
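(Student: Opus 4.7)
The plan is to adapt Jockusch-Soare $\Pi^{0,C}_1$-class forcing, modifying the argument of Theorem~14 of~\cite{Patey2015Iterativea}, to preserve $C$-hypersurjectivity of~$f$ for~$L$. Conditions will be pairs $(\tau, \mathcal{P})$ in which $\mathcal{P}$ is a nonempty $\Pi^{0,C}_1$ subclass of~$[T]$ and $\tau$ is extendible in~$\mathcal{P}$ (that is, $\mathcal{P} \cap [\tau] \neq \emptyset$), ordered by $(\tau', \mathcal{P}') \leq (\tau, \mathcal{P})$ iff $\tau' \supseteq \tau$, $\mathcal{P}' \subseteq \mathcal{P}$, and $\mathcal{P}' \cap [\tau'] \neq \emptyset$. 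A sufficiently generic filter will produce a unique path $P \in [T]$, and it will suffice to prove an extension lemma asserting that for every condition $(\tau, \mathcal{P})$, every Turing functional~$\Gamma$, and every $y \in L$, some extension forces either (a) a bound $x \in \N$ such that $\Gamma^{G \oplus C}(A) \not\downarrow = 1$ for every finite set $A$ with $\min A > x$ and every $G$ satisfying the condition, or (b) a finite set $B$ with $f(B) = \{y\}$ such that $\Gamma^{G \oplus C}(B) \downarrow = 1$ for every such~$G$.

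The extension lemma will be proved by iterating a three-way dichotomy starting from the given condition $(\tau_0, \mathcal{P}_0)$. At stage~$n$ I will consider the $\Sigma^{0,C}_1$ formula $\varphi_n(A)$ asserting that the $\Pi^{0,C}_1$ class $\{G \in \mathcal{P}_n \cap [\tau_n] : \Gamma^{G \oplus C}(A) \not\downarrow = 1\}$ is empty. If $\varphi_n$ is essential, applying $C$-hypersurjectivity of~$f$ for~$L$ at~$y$ will yield a finite set $B$ with $f(B) = \{y\}$ and $\varphi_n(B)$, so the current condition already forces outcome~(b). Otherwise, fixing a witness $x_n$ to the failure of essentiality, I will form the $\Pi^{0,C}_1$ class $\mathcal{Q}_n = \{G \in \mathcal{P}_n \cap [\tau_n] : \Gamma^{G \oplus C}(A) \not\downarrow = 1 \text{ for every } A \text{ with } \min A > x_n\}$; if $\mathcal{Q}_n \neq \emptyset$, extending to $(\tau_n, \mathcal{Q}_n)$ will yield outcome~(a). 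If instead $\mathcal{Q}_n = \emptyset$, compactness of $\mathcal{P}_n \cap [\tau_n]$ will produce a finite family $F_n$ of finite sets $A$ with $\min A > x_n$ whose open classes $\mathcal{P}^{(A)} = \{G : \Gamma^{G \oplus C}(A) \downarrow = 1\}$ cover $\mathcal{P}_n \cap [\tau_n]$, and I will pick some $A_n \in F_n$ together with $\tau_{n+1} \supseteq \tau_n$ in $T_{\mathcal{P}_n}$ satisfying $\Gamma^{\tau_{n+1} \oplus C}(A_n) \downarrow = 1$ and $\mathcal{P}_n \cap [\tau_{n+1}] \neq \emptyset$, set $\mathcal{P}_{n+1} = \mathcal{P}_n$, and iterate.

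The main obstacle will be to show that this iteration always terminates. Since $\Gamma^{\tau_{n+1} \oplus C}(A_n) \downarrow = 1$ with use bounded by $|\tau_{n+1}|$, the formula $\varphi_{n+1}(A_n)$ holds at stage~$n+1$, so in the non-essential case any witness $x_{n+1}$ must satisfy $x_{n+1} \geq \min A_n > x_n$; consequently the sequence $(x_n)$ will be strictly increasing and $\min A_n$ will tend to infinity. The delicate step, and the point where the adaptation of Theorem~14 of~\cite{Patey2015Iterativea} enters, will be to arrange that the $A_n$ are produced by a uniformly $C$-computable selection, so that $W = \{A_n : n \in \N\}$ is genuinely a $\Sigma^{0,C}_1$ family of finite sets, which is cofinal by the previous observation and therefore makes $\psi(U) :\equiv U \in W$ essential. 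Applying $C$-hypersurjectivity of~$f$ for~$L$ at~$y$ to~$\psi$ will then deliver some $A_n \in W$ with $f(A_n) = \{y\}$, whereupon the condition $(\tau_{n+1}, \mathcal{P}_n)$ forces $\Gamma^{G \oplus C}(A_n) \downarrow = 1$ and realizes outcome~(b), contradicting the assumption of infinite iteration. The extension lemma will follow, and a standard generic-filter construction will then deliver the desired $P \in [T]$ such that~$f$ is $(P \oplus C)$-hypersurjective for~$L$.
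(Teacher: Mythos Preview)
Your termination argument has a genuine gap at exactly the point you flag as ``delicate'': the sequence $(A_n)_n$ cannot be produced by a uniformly $C$-computable selection, so $\psi(U)\equiv U\in W$ is not a $\Sigma^{0,C}_1$ formula and hypersurjectivity does not apply to it. Two steps of your iteration are not $C$-effective. First, choosing the witness $x_n$ to the non-essentiality of $\varphi_n$ amounts to finding some $x$ such that the $\Pi^{0,C}_1$ statement $(\forall A>x)\,\neg\varphi_n(A)$ holds; this is a $\Sigma^{0,C}_2$ search with no $C$-computable procedure. Second, picking $\tau_{n+1}$ with $\mathcal P_n\cap[\tau_{n+1}]\neq\emptyset$ requires deciding extendibility in a $\Pi^{0,C}_1$ class, which is itself $\Pi^{0,C}_1$ and not $C$-decidable. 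If you drop the extendibility requirement and allow $\tau_{n+1}$ to be a dead node of the tree, then $\varphi_{n+1}$ becomes vacuously essential and the outcome~(b) you obtain at the next stage says nothing about any path. So the iteration, as described, does not yield a $\Sigma^{0,C}_1$ family $W$, and no rearrangement of the three-way dichotomy repairs this while keeping a single set $A_n$ per stage.

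The paper's proof sidesteps the whole iteration by packaging all paths into one $\Sigma^{0,C}_1$ formula via a subset trick: it sets
\[
\psi(U)\;=\;(\exists n)(\forall \tau\in T_s\cap 2^n)(\exists \tilde A\subseteq U)\,\varphi(\tau,\tilde A),
\]
quantifying over \emph{all} nodes of the current tree at some level (extendible or not) and allowing each node to use its own subset of $U$. If $\psi$ is essential, hypersurjectivity gives a single $A$ with $f(A)=\{y\}$, and every path $P$ then satisfies $\varphi(P,\tilde A)$ for some $\tilde A\subseteq A$, hence $f(\tilde A)=\{y\}$. If $\psi$ is not essential with witness $t$, the pruned tree $\{\tau\in T_s:(\forall A>t)\,\neg\varphi(\tau,A)\}$ is infinite precisely because of the subset closure: were it finite, the finitely many witnessing sets at some level would union to a $U>t$ with $\psi(U)$. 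This single two-case split replaces your unbounded iteration and avoids both non-effective steps; the ``different paths use different $A$'s'' problem that forces you to iterate is absorbed by the quantifier $(\exists\tilde A\subseteq U)$.
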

\begin{proof}
Fix a set~$C$, let~$f : \N \to \N$ be a function $C$-hypersurjective for~$L$,
and let~$T \subseteq 2^{<\omega}$ be a $C$-computable infinite binary tree.
We construct an infinite decreasing sequence of computable subtrees $T = T_0 \supseteq T_1 \supseteq \dots$
such that for every path $P$ through $\bigcap_s T_s$, $f$ is $P \oplus C$-hypersurjective for~$L$.
Note that the intersection~$\bigcap_s T_s$ is non-empty since the $T$'s are infinite trees.
More precisely, if we interpret $s$ as a tuple~$\tuple{y, \varphi}$ where $y \in L$
and $\varphi(G,U)$ is a $\Sigma^{0,C}_1$ formula, we want to satisfy the following requirement.

\begin{quote}
$\Rcal_s$ : For every path~$P$ through~$T_{s+1}$, either $\varphi(P, U)$ is not essential,
or~$\varphi(P, A)$ holds for some finite set~$A$ such that~$f(A) = \{y\}$.
\end{quote}

At stage~$s = \tuple{y, \varphi}$, given some infinite, computable binary tree~$T_s$, 
define the $\Sigma^{0,C}_1$ formula
\[
\psi(U) = (\exists n)(\forall \tau \in T_s \cap 2^n)(\exists \tilde{A} \subseteq U)\varphi(\tau, \tilde{A})
\]
We have two cases.
In the first case, $\psi(U)$ is not essential with some witness~$t$. By compactness,
the following set is an infinite $C$-computable subtree of~$T_s$:
\[
T_{s+1} = \{ \tau \in T_s : (\forall A > t)\neg \varphi(\tau, A) \}
\]
The tree $T_{s+1}$ has been defined so that $\varphi(P, U)$
is not essential for every~$P \in [T_{s+1}]$.
In the second case, $\psi(U)$ is essential. By $C$-hypersurjectivity of~$f$ for~$L$,
there is a finite set~$A$ such that~$\psi(A)$ holds and~$f(A) = \{y\}$.
We claim that for every path~$P \in [T_s]$,
$\varphi(P, \tilde{A})$ holds for some set~$\tilde{A}$ such that~$f(\tilde{A}) = \{y\}$.
Fix some path~$P \in [T_s]$. Unfolding the definition of~$\psi(A)$, there is some~$n$ such that $\varphi(P \uh n, \tilde{A})$ holds
for some set~$\tilde{A} \subseteq A$. By continuity, $\varphi(P, \tilde{A})$ holds.
Moreover, $f(\tilde{A}) = \{y\}$ since $f(A) = \{y\}$
Set~$T_{s+1} = T_s$ and go to the next stage.
This completes the proof of Theorem~\ref{thm:wkl-preservation-hypersurjectivity}.
\end{proof}

We are now ready to prove that Ramsey's theorem for pairs
admits preservation of hypersurjectivity.

\begin{theorem}
$\rt^2_2$ admits preservation of hypersurjectivity.
\end{theorem}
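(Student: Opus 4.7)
The strategy is to appeal to the Cholak--Jockusch--Slaman decomposition $\rt^2_2 \equiv \coh \wedge \srt^2_2$ and to verify preservation of hypersurjectivity separately for each of $\coh$ and $\srt^2_2$. Preservation properties of this kind compose sequentially: given a $C$-computable coloring $c : [\N]^2 \to 2$ and a $C$-hypersurjective function $g$ (witnessed by an infinite $L$), first apply the $\coh$-preservation to the sequence $(R_n)_{n \in \N}$ defined by $R_n = \{y : c(n, y) = 0\}$ to obtain a cohesive set $Z$ such that $g$ is $Z \oplus C$-hypersurjective for $L$; then apply the $\srt^2_2$-preservation relativized to $Z \oplus C$ to the induced stable coloring $c \uh [Z]^2$, yielding an infinite $c$-homogeneous $H \subseteq Z$ with $g$ still $H \oplus C$-hypersurjective for $L$.

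For the $\coh$ step I would use computable Mathias forcing with conditions $(F, X)$ satisfying that $g$ is $X \oplus C$-hypersurjective for $L$. Extensions of the form $(F, X \cap R_n)$ or $(F, X \cap \overline{R_n})$ immediately preserve the constraint since $(X \cap R_n) \oplus C \leq_T X \oplus C$ and hypersurjectivity is monotone in the oracle. The force lemma, showing that a sufficiently generic $G$ makes $g$ be $G \oplus C$-hypersurjective for $L$, follows from the standard trick: given an essential $\Sigma^{0, G \oplus C}_1$ formula $\psi(U)$ and a target $y \in L$, rewrite the assertion as an essential $\Sigma^{0, X \oplus C}_1$ formula stating that some finite $E \subseteq X$ extending $F$ witnesses $\psi$, and invoke hypersurjectivity of $g$ at $y$ over $X \oplus C$.

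For the $\srt^2_2$ step I would run the first-jump-control Mathias forcing of Cholak--Jockusch--Slaman, where reservoirs live in $\Pi^{0, C}_1$ classes carving out subsets of a color class of $\tilde{c}(x) = \lim_s c(x, s)$. Conditions $(F, X)$ additionally require that $g$ is $X \oplus C$-hypersurjective for $L$. The crucial extension step, which refines $X$ to $X' \subseteq X$ inside a $\Pi^{0, X \oplus C}_1$ class of potential successor reservoirs, invokes Theorem~\ref{thm:wkl-preservation-hypersurjectivity} with the fixed $L$ to produce a new reservoir along which $g$ remains hypersurjective for $L$ over $X' \oplus C$. The force lemma is handled just as in the $\coh$ case.

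The main obstacle is the $\srt^2_2$ step, where two layers of $\Pi^0_1$-class arguments must be coordinated: the outer $\Pi^{0, C}_1$ classes encoding the $\Delta^0_2$-style search for a homogeneous set, and the inner preservation of hypersurjectivity at every refinement. The fact that Theorem~\ref{thm:wkl-preservation-hypersurjectivity} holds for any fixed $L$ is precisely what allows each refinement to succeed without shrinking the hypersurjectivity witness, so that $L$ survives the whole construction. Combining the two preservation results then yields preservation of hypersurjectivity for $\rt^2_2$.
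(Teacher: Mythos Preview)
Your decomposition into $\coh \wedge \srt^2_2$ is a legitimate alternative to the paper's direct argument, and your $\coh$ sketch is fine. The gap is in the $\srt^2_2$ step.

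The phrase ``first-jump-control Mathias forcing \dots where reservoirs live in $\Pi^{0,C}_1$ classes carving out subsets of a color class'' conflates two different things. First-jump control is the Cholak--Jockusch--Slaman machinery for producing low${}_2$ solutions via an oracle of PA degree over~$\emptyset'$; it is not a preservation framework and gives no handle on hypersurjectivity. More concretely, the color classes of the limit function $\tilde c$ are only $\Delta^{0,C}_2$, so there is no $\Pi^{0,C}_1$ class whose members are infinite subsets of a fixed color class, and you cannot maintain a reservoir $X$ inside a color class while keeping $X \oplus C$ tame enough to preserve hypersurjectivity.

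This is exactly where ``the force lemma is handled just as in the $\coh$ case'' breaks down. In the $\coh$ argument the auxiliary formula $\psi(U)$ searches over arbitrary finite $E \subseteq X$, which is $\Sigma^{0, X \oplus C}_1$. For a single-sided $\srt^2_2$ forcing with $F$ inside one color class, you would need $E$ to lie in that same $\Delta^{0,C}_2$ color class, and that search is not $\Sigma^{0, X \oplus C}_1$. The standard repair---and what the paper does for $\rt^2_2$ directly, bypassing the decomposition---is a two-sided forcing with conditions $(F_0, F_1, X)$ and disjunctive requirements $\Rcal^{G_0}_{e_0, y} \vee \Rcal^{G_1}_{e_1, y}$: the key formula then ranges over all $2$-partitions of a finite $H \subseteq X$, a computable search, and the $\wkl$ preservation theorem is invoked only in the non-essential case to pick a partition of the whole reservoir. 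A side effect is that $L$ does \emph{not} survive intact: one ends with $G_i \oplus C$-hypersurjectivity only for some infinite $L_i \subseteq L$, contrary to your final paragraph. Since your $\srt^2_2$ step, once fixed, requires precisely this two-sided argument, the detour through $\coh$ buys nothing over the paper's route.
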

\begin{proof}
Let~$C$ be a set and $g : \N \to \N$ be a function $C$-hypersurjective for some infinite set~$L \subseteq \N$.
Fix a $C$-computable coloring $f : [\N]^2 \to 2$. As usual, assume that there is no infinite $f$-homogeneous set~$H$
such that $g$ is $H \oplus C$-hypersurjective, otherwise we are done.
We will build two infinite sets~$G_0, G_1$, $f$-homogeneous for color 0 and 1, respectively, 
and such that $g$ is either~$G_0 \oplus C$-hypersurjective, or~$G_1 \oplus C$-hypersurjective.

We will use forcing conditions $(F_0, F_1, X)$, where $F_0$ and~$F_1$ are finite sets of integers,
$X$ is an infinite set such that~$\max(F_0, F_1) < \min X$ and for every $i < 2$ and every~$x \in X$,
$F_i \cup \{x\}$ is $f$-homogeneous for color~$i$. We furthermore impose that~$g$ is $X \oplus C$-hypersurjective for~$L$.
A condition~$d = (E_0, E_1, Y)$ extends $c = (F_0, F_1, X)$ if $(E_i, Y)$ Mathias extends $(F_i, X)$ for each~$i < 2$.
A pair of sets~$G_0, G_1$ satisfies a condition~$(F_0, F_1, X)$ if for each~$i < 2$, $G_i$ if $f$-homogeneous for color~$i$
and satisfies the Mathias condition~$(F_i, X)$. Again, we start by proving that every infinite filter yields two infinite sets.

\begin{lemma}\label{lem:hypersurjectivity-preservation-ext}
For every condition~$c = (F_0, F_1, X)$ and every~$i < 2$, 
there is an extension~$d = (E_0, E_1, X)$ of $c$ such that~$|E_i| > |F_i|$.
\end{lemma}
\begin{proof}
For every~$x \in X$, let~$S_x = \{ y \in X : y > x \wedge f(x, y) = i \}$.
If $S_x$ is finite for every $x \in X$, then one can $X \oplus C$-computably thin out
the set~$X$ to obtain an infinite set $f$-homogeneous for color~$1-i$, contradicting our assumption.
Therefore, there is some~$x \in X$ such that~$S_x$ is infinite. The condition~$(E_0, E_1, S_x)$
where~$E_i = F_i \cup \{x\}$ and $E_{1-i} = F_{1-i}$ is the desired extension of~$c$.
\end{proof}

Fix an enumeration $\varphi_0(G, U), \varphi_1(G, U), \dots$ of all $\Sigma^{0, C}_1$ formulas.
We will now ensure the following disjunctive requirements for each~$e_0, e_1 \in \N$ and~$y \in L$.
\[
\Rcal_{e_0, e_1, y} : \Rcal^{G_0}_{e_0,y} \vee \Rcal^{G_1}_{e_1,y}
\]
where~$\Rcal^G_{e,y}$ is the statement ``If $\varphi_e(G, U)$ is essential, then $\varphi(G, A)$ holds
for some finite set~$A$ such that~$g(A) = \{y\}$''.
If all the disjunctive requirements are satisfied for some pair of sets~$G_0, G_1$,
then there will be a 2-partition~$L_0 \cup L_1 = L$ such that $g$ is $G_i \oplus C$-hypersurjective for~$L_i$
for each~$i < 2$. Among $L_0$ and~$L_1$, at least one must be infinite. We will then pick the corresponding~$G_i$.
We say that a condition~$c$ forces~$\Rcal_{e_0, e_1, y}$ if it holds for every pair of sets~$G_0, G_1$
satisfying~$c$.

\begin{lemma}\label{lem:hypersurjectivity-preservation-force}
For every condition~$c = (F_0, F_1, X)$, every pair of indices~$e_0, e_1 \in \N$
and every $y \in L$, there is an extension~$d$ of~$c$ forcing~$\Rcal_{e_0, e_1, y}$.
\end{lemma}
\begin{proof}
Let~$\psi(U)$ be the~$\Sigma^{0,X \oplus C}_1$ formula which holds
if there is a finite set~$H \subseteq X$ such that for every 2-partition~$H_0 \cup H_1 = H$,
there is some~$i < 2$, some finite set~$U_i \subseteq U$ and some set~$E \subseteq H_i$
$f$-homogeneous for color~$i$ such that~$\varphi_{e_i}(F_i \cup E, U_i)$ holds.
We have two cases.

Case 1: the formula~$\psi(U)$ is not essential, with witness~$x \in \N$.
Let~$\Ccal$ be the $\Pi^{0, X \oplus C}_1$ class of all sets~$H_0 \oplus H_1$
such that $H_0 \cup H_1 = X$ and for every~$i < 2$, every finite set~$U_i > x$ and every finite set~$E \subseteq H_i$
$f$-homogeneous for color~$i$, $\varphi_{e_i}(F_i \cup E, U_i)$ does not hold.
Since there is not finite set~$U > x$ such that~$\varphi(U)$ holds, then by a compactness
argument~$\Ccal$ is non-empty. By preservation of hypersurjectivity of~$\wkl$ for~$L$ 
(Theorem~\ref{thm:wkl-preservation-hypersurjectivity}), there is some $H_0 \oplus H_1 \in \Ccal$
such that $g$ is $H_0 \oplus H_1 \oplus X \oplus C$-hypersurjective for~$L$.
Let~$i < 2$ be such that~$H_i$ is infinite. The condition~$(F_0, F_1, H_i)$
is an extension of~$c$ forcing~$\Rcal^{G_i}_{e_i, y}$, hence~$\Rcal_{e_0, e_1, y}$.

Case 2: the formula~$\psi(U)$ is essential. By $X \oplus C$-hypersurjectivity of~$g$ for~$L$,
$\psi(A)$ holds for some finite set~$A$ such that~$g(A) = \{y\}$.
Let~$H \subseteq X$ be the finite set witnessing that~$\psi(A)$ holds.
Every~$z \in X$, induces a 2-partition~$H_0 \cup H_1 = H$ defined by
$H_i = \{ x \in H : f(x, z) = i \}$. Since there are finitely many 2-partitions of~$H$,
there is a 2-partition~$H_0 \cup H_1 = H$ such that the set
\[
Y = \{ z : X : z > \max H \wedge (\forall i < 2)(\forall x \in H_i)f(x, z) = i \}
\]
is infinite. In particular, there is some~$i < 2$ and some set~$E \subseteq H_i$
$f$-homogeneous for color~$i$ such that~$\varphi_{e_i}(F_i \cup E, A_i)$ holds
for some set~$A_i \subseteq A$. The condition~$(E_0, E_1, Y)$
defined by~$E_i = F_i \cup E$ and~$E_{1-i} = F_{1-i}$ is an extension of~$c$
forcing~$\Rcal^{G_i}_{e_i, y}$, hence $\Rcal_{e_0, e_1, y}$.
\end{proof}

Let~$\Fcal = \{c_0, c_1, \dots\}$ be a sufficiently generic filter containing $(\emptyset, \emptyset, \omega)$,
where $c_s = (F_{0,s}, F_{1,s}, X_s)$. The filter~$\Fcal$ yields a unique pair of sets~$G_0 = \bigcup_s F_{0,s}$
and~$G_1 = \bigcup_s F_{1,s}$.
By Lemma~\ref{lem:hypersurjectivity-preservation-ext}, both~$G_0$ and~$G_1$ are infinite.
By Lemma~\ref{lem:hypersurjectivity-preservation-force}, there is some~$i < 2$ and some infinite set~$L_i \subseteq L$
such that $g$ is $G_i \oplus C$-hypersurjective for~$L$.
This completes the proof.
\end{proof}

\begin{corollary}
$\rt^2_2 \wedge \wkl$ does not imply~$\mathsf{CNS}$ over~$\rca$.
\end{corollary}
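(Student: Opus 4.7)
The plan is to construct an $\omega$-model of $\rt^2_2 \wedge \wkl$ that fails to satisfy $\mathsf{CNS}$. The witness of failure will be the specific $\Delta^0_2$ function $f : \N \to \N$ built in the earlier theorem showing that $\mathsf{CNS}$ does not admit preservation of hypersurjectivity: that $f$ is hypersurjective for $\N$ and satisfies $f(x) \leq x$, so it is a legitimate $\mathsf{CNS}$-instance, and crucially no infinite non-decreasing subsequence $H$ of $f$ leaves $f$ hypersurjective for any infinite $L \subseteq \N$.

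Given this witness, I would run the standard iterative $\omega$-model construction. Enumerate all pairs $\langle X, e \rangle$ where $X$ is a set already added to the model and $e$ is an index for an $X$-computable instance of $\rt^2_2$ or $\wkl$. At stage $s+1$, invoke either Theorem~\ref{thm:wkl-preservation-hypersurjectivity} for $\wkl$-instances or the preservation theorem for $\rt^2_2$ proved just above, to obtain a solution $G_{s+1}$ such that $f$ remains $(X \oplus G_{s+1})$-hypersurjective for some infinite subset of $\N$. The Turing ideal $\Mcal$ generated by $f$ together with all the $G_s$'s is then an $\omega$-model of $\rt^2_2 \wedge \wkl$ in which, for every $Z \in \Mcal$, the function $f$ is $Z$-hypersurjective for some infinite $L_Z \subseteq \N$.

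To conclude, observe that $f \in \Mcal$ is a $\mathsf{CNS}$-instance inside $\Mcal$. If some $H \in \Mcal$ were an infinite non-decreasing subsequence of $f$, then $f$ would be $H$-hypersurjective for some infinite $L_H$, directly contradicting the failure of preservation established for $\mathsf{CNS}$. Hence no such $H$ belongs to $\Mcal$, so $\Mcal \not\models \mathsf{CNS}$.

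The main subtlety is the bookkeeping around the varying parameter $L$: each application of the $\rt^2_2$-preservation theorem may shrink the current $L$ to an infinite subset of one of the two partition classes. Because each preservation theorem only requires and outputs \emph{some} infinite witness rather than a prescribed one, the invariant ``for every set added so far, $f$ is hypersurjective for some infinite $L$'' is straightforward to maintain stage by stage, and no global coherence between the individual $L_Z$'s is needed for the final diagonal argument.
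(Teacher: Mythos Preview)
Your overall approach matches the paper's: iterate the preservation theorems for $\rt^2_2$ and $\wkl$ to build an $\omega$-model in which $f$ remains hypersurjective relative to every set, so that no infinite non-decreasing subsequence of $f$ can belong to the model. One point of execution needs correcting.

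You should not put the graph of $f$ into $\Mcal$ as a generator. A $\mathsf{CNS}$-instance is a computably bounded $\Delta^0_2$ function, coded inside $\rca$ by a computable approximation together with a computable bound; both are already in any $\omega$-model, so nothing extra must be added for $f$ to count as an instance in $\Mcal$. If the graph of $f$ itself were in $\Mcal$, the argument would break in two ways. First, you have only established that $f$ is $\emptyset$-hypersurjective, not $f$-hypersurjective, so the invariant ``$f$ is $Z$-hypersurjective for every $Z \in \Mcal$'' already fails at $Z = f$. Second, and fatally, every function admits an infinite non-decreasing subsequence computable from the function itself (this is the opening observation of the paper); that subsequence would then lie in the Turing ideal, and $\Mcal$ would satisfy this instance of $\mathsf{CNS}$ after all. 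Simply take $\Mcal$ to be the Turing ideal generated over the computable sets by the $G_s$'s alone; the rest of your argument, including the handling of the shrinking witness $L$, is then correct as written.
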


\section{Low${}_2$ non-decreasing subsequences}

Cholak, Jockusch and Slaman~\cite{Cholak2001strength} proved that every computable instance of~$\rt^2_2$
admits a low${}_2$ solution. In this section, we prove that the same property holds for~$\mathsf{CNS}$.
Given two sets~$X$ and~$A$, an integer~$e \in \N$ is a \emph{$\Delta^{0,X}_2$ index} of~$A$ if $\Phi^{X'}_e = A$.
Similarly, $e$ is an \emph{$X$-jump index} of~$A$ if $\Phi^X_e = A'$.
A function~$f : \N \to \{0, 1\}$ is \emph{$X$-dnc${}_2$} if~$f(e) \neq \Phi^X_e(e)$ for every~$e$.
The following theorem is obtained by looking at the uniformity
of the first jump control of Cholak, Jockusch and Slaman~\cite{Cholak2001strength}.

\begin{theorem}\label{thm:first-jump-d22}
There are two computable functions~$h_0, h_1 : \N \to \N$
such that for every set~$C$ and every $C'$-dnc${}_2$ function~$f$,
if~$e$ is the $\Delta^{0,C}_2$ index of a set~$A$,
then either~$h_0(e)$ is an $f$-jump index of~$Y_0 \oplus C$, where~$Y_0$ is an infinite subset of~$A$,
or~$h_1(e)$ is an $f$-jump index of~$Y_1 \oplus C$, where~$Y_1$ is an infinite subset of~$\overline{A}$.
\end{theorem}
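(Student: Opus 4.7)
The plan is to uniformize the Cholak--Jockusch--Slaman first-jump-control construction, tracking uniformity in the $\Delta^{0,C}_2$ index~$e$ of~$A$. From~$e$ we obtain, uniformly, $\Sigma^{0,C}_2$-indices of both~$A$ and~$\overline{A}$, which initialize a forcing notion whose conditions are tuples $(F_0, F_1, X)$ where $F_0 \subseteq A$ and $F_1 \subseteq \overline{A}$ are finite and $X$ is an infinite set with a $\Sigma^{0,C}_2$ index. Since $A \cup \overline{A} = \N$, every such reservoir has either $X \cap A$ or $X \cap \overline{A}$ infinite, guaranteeing that at least one of the partial objects $Y_0 \subseteq A$ or $Y_1 \subseteq \overline{A}$ produced by a sufficiently generic filter is itself infinite.

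The jump-control requirements are, for each $e' \in \N$ and $i < 2$, to decide $\Phi^{Y_i \oplus C}_{e'}(e')$. The standard forcing question asking whether some extension forces $\Phi^{Y_i \oplus C}_{e'}(e') \halts$ is uniformly $\Sigma^{0,C}_2$ in $e, e', i$, so its index can be extracted computably from~$e$. The Cholak--Jockusch--Slaman device replaces each such question by a pair of disjoint $\Sigma^{0,C}_2$ sets of witnesses (one side asserts that convergence can be forced, the dual that divergence can be forced on a suitable subcondition), of which at least one is non-empty. A $C'$-\textsf{dnc}${}_2$ function~$f$ can then, uniformly in~$e$, pick a member of the non-empty side of each pair, and this single selection both drives the extension in the forcing and supplies the value of $\Phi^{Y_i \oplus C}_{e'}(e')$ in the jump computation.

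Setting $h_0(e)$ and $h_1(e)$ to be the indices of the Turing functionals (in~$f$) that perform the above construction-and-readoff for the $A$-side and the $\overline{A}$-side respectively yields the desired computable functions; whichever of $Y_0, Y_1$ is in fact infinite, the corresponding $h_i(e)$ correctly computes $(Y_i \oplus C)'$ from~$f$. The main obstacle is to fix once and for all, before $f$ and~$C$ are mentioned, the enumeration of $\Sigma^{0,C}_2$-forcing questions and their duals in a way that depends only on~$e$, so that the diagonalization guarantee of the \textsf{dnc}${}_2$-function~$f$ applies to this enumeration uniformly. This requires a careful a priori indexing of the forcing notion and of the associated ``large/small'' dichotomy, but no new combinatorics beyond what already appears in~\cite{Cholak2001strength}.
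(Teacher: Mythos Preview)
Your outline tracks the CJS template but misses the point that makes it go through with only a $C'$-dnc${}_2$ oracle. The requirements cannot be handled one side at a time: the question ``can some finite $E\subseteq X\cap A_i$ force $\Phi^{(F_i\cup E)\oplus C}_{e'}(e')\!\downarrow$'' is genuinely $\Sigma^{0,C'}_1$ (it quantifies over $E$ and checks $E\subseteq A_i$ using the $\Delta^{0,C}_2$ set $A$), and a function that is merely PA over $C'$ cannot decide such questions. Nor is ``divergence forceable on a suitable subcondition'' $\Sigma^{0,C}_2$ for a single side in any evident way. What CJS actually do (and what the paper invokes as their Lemma~4.6) is treat each pair $(e_0,e_1)$ \emph{disjunctively}: one forms the $\Pi^{0,X\oplus C}_1$ class of partitions $Z_0\cup Z_1=X$ forcing divergence on both sides simultaneously. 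Emptiness of this class is $\Sigma^{0,X\oplus C}_1$, hence decidable from $(X\oplus C)'\leq_T C'\leq_T f$ since $X$ is kept low over $C$; and if the class is nonempty, one of $Z_0,Z_1$ is automatically infinite because they partition $X$. This disjunctive packaging is what eliminates $A$ from the forcing question and guarantees an infinite subreservoir, and it is absent from your sketch.

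The second gap is the infinity argument. Noting that the reservoir meets $A$ or $\overline A$ in an infinite set is not enough: you must actually place infinitely many elements into one of the $F_i$, and the paper explicitly observes that one \emph{cannot} uniformly interleave growth steps here (one cannot $f$-decide which side admits a low solution). The new ingredient in the paper is that the disjunctive decision process already enforces infinity: if $G\cap A_i$ has exactly $k$ elements, then the index $e_i$ coding ``the oracle has at least $k{+}1$ elements'' can never be decided on side $i$ in the strong sense used; hence if the algorithm for $h_i(e)$ is total then $G\cap A_i$ is infinite, and by the pairing of stages, if $h_0(e)$ is partial then $h_1(e)$ is total. This lemma, not your reservoir observation, is what links totality of $h_i(e)$ to infinity of $Y_i$.
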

\begin{proof}
Fix a set~$C$ and let~$f$ be a $C'$-dnc${}_2$ function
and $e$ be an index of a $\Delta^{0,C}_2$ set~$A$.
We will describe an $f$-computable construction of a set~$G$
such that for every pair~$e_0, e_1 \in \N$,
either~$((G \cap A) \oplus C)'(e_0)$, or~$((G \cap \overline{A}) \oplus C)'(e_1)$ is decided.
We work with Mathias conditions~$(F, X)$ where~$X$ is low over~$C$.
An \emph{index} of such a condition~$c = (F, X)$ is a code $\tuple{F, i}$ such that~$\Phi_i^{C'} = (X \oplus C)'$.

To simplify our notation, we let~$A_0 = A$ and~$A_1 = \overline{A}$.
Given some~$i < 2$ and some~$e_i \in \N$, a condition~$c = (F, X)$ \emph{decides} $((G \cap A_i) \oplus C)'(e_i)$ if
either~$\Phi_{e_i}^{(F \cap A_i) \oplus C}(e_i) \downarrow$, or~$\Phi_{e_i}^{((F \cap A_i) \cup H) \oplus C}(e_i) \uparrow$
for every set~$H \subseteq X$. Note that $H$ is not necessarily included in~$A_i$. This precision will be used
in Lemma~\ref{lem:first-jump-d22-finite}.

\begin{lemma}[Lemma 4.6 in \cite{Cholak2001strength}]\label{lem:first-jump-d22-force}
Given a condition~$c = (F, X)$ and a pair of indices~$e_0, e_1 \in \N$,
there is an extension~$d$ of~$c$ deciding either~$((G \cap A) \oplus C)'(e_0)$, or~$((G \cap \overline{A}) \oplus C)'(e_1)$.
Furthermore, and index of~$d$ may be $f$-computably computed from~$e_0, e_1$ and an index of~$c$,
and one can $f$-computably decide which case applies. 
\end{lemma}

Using Lemma~\ref{lem:first-jump-d22-force}, build an infinite
$f$-computable decreasing sequence of conditions~$(\emptyset, \omega) = (F_0, X_0) \geq (F_1, X_1) \geq \dots$
such that for every~$s = \tuple{e_0, e_1}$,
$(F_{s+1}, X_{s+1})$ decides either~$((G \cap A) \oplus C)'(e_0)$, or~$((G \cap \overline{A}) \oplus C)'(e_1)$.
Unlike the original construction~\cite{Cholak2001strength}, we do not interleave
requirements to ensure that both~$G \cap A$ and $G \cap \overline{A}$ are infinite,
and indeed, it is not possible since we cannot uniformly decide whether there is a low solution or not. 
Thankfully, the infinity requirements are already ensured by the decision process,
as shows the following lemma.

\begin{lemma}\label{lem:first-jump-d22-finite}
If~$G \cap A_i$ is finite, then for some~$e_i \in \N$,
there is no stage~$s$ at which $(F_{s+1}, X_{s+1})$ decides~$((G \cap A_i) \oplus C)'(e_i)$.
\end{lemma}
\begin{proof}
Let~$k = |G \cap A_i|$, and let~$e_i \in \N$ be such that for every set~$H$,
$\Phi_{e_i}^{H \oplus C}(e_i) \downarrow$ if and only if~$H$ contains at least~$k+1$ elements.
Suppose there is a stage~$s$ at which $(F_{s+1}, X_{s+1})$ decides~$((G \cap A_i) \oplus C)'(e_i)$.
By definition, $\Phi_{e_i}^{(F \cap A_i) \oplus C}(e_i) \downarrow$, 
or~$\Phi_{e_i}^{((F \cap A_i) \cup H) \oplus C}(e_i) \uparrow$ for every set~$H \subseteq X$.
The former does not hold since~$|F \cap A_i| = k$, and neither does the latter
since $\Phi_{e_i}^{((F \cap A_i) \cup H) \oplus C}(e_i) \downarrow$ for any infinite set~$H \subseteq X$.
\end{proof}

For each~$i < 2$, let $h_i(e)$ be the Turing index of the $f$-algorithm
which on input~$e_i$, $f$-computably runs the construction until it finds some stage~$s$
at which~$(F_{s+1}, X_{s+1})$ decides~$((G \cap A) \oplus C)'(e_i)$. 
If such stage is found, the algorithm outputs the answer, otherwise it does not terminate.
We claim that one of the two following holds:
\begin{itemize}
	\item[(a)] $h_0(e)$ is an $f$-jump index of~$(G \cap A) \oplus C$ and~$G \cap A$ is infinite;
	\item[(b)] $h_1(e)$ is an $f$-jump index of~$(G \cap \overline{A}) \oplus C$ and $G \cap \overline{A}$ is infinite.
\end{itemize}
If case (a) does not hold, then either~$G \cap A$ is finite,
or the algorithm of~$h_0(e)$ is not total, and by Lemma~\ref{lem:first-jump-d22-finite},
the former implies the latter. Moreover, if the algorithm of~$h_0(e)$ is not total,
then by the usual pairing argument, the algorithm of~$h_1(e)$ is total,
and by the contrapositive of Lemma~\ref{lem:first-jump-d22-finite}, $G \cap \overline{A}$
is infinite, so case (b) holds.
This completes the proof.
\end{proof}

We are now ready to prove the main theorem of the section.

\begin{theorem}\label{thm:first-jump-cns}
Fix a set~$C$ and a set~$P \gg C'$.
For every~$C$-computable instance of~$\mathsf{CNS}$,
there is an infinite non-decreasing subsequence~$G$ such that~$(G \oplus C)' \leq_T P$.
\end{theorem}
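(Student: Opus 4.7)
The plan is to perform an $f$-computable Mathias-style forcing for $\mathsf{CNS}$, where $f \leq_T P$ is a $C'$-dnc${}_2$ function provided by $P \gg C'$, using Theorem~\ref{thm:first-jump-d22} as the extension machinery. The desired bound $(G \oplus C)' \leq_T f \leq_T P$ then follows from the usual strategy of deciding $\Phi_e^{G \oplus C}(e)$ by stage~$e$.

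The forcing conditions are pairs $(F, X)$ where $F$ is non-decreasing for the $C$-computable instance $\tilde{f}$, $X$ is an infinite reservoir with $\max F < \min X$ and $\tilde{f}(x) \geq v_F := \max_{y \in F} \tilde{f}(y)$ for every $x \in X$, and we carry along an $f$-jump index for $X \oplus C$. The first extension lemma grows~$F$: pick some $x \in X$ and apply Theorem~\ref{thm:first-jump-d22} (at the shifted base $X \oplus C$) to the $\Delta^{0, X \oplus C}_2$ set $A = \{y \in X : y > x \wedge \tilde{f}(y) \geq \tilde{f}(x)\}$. If we obtain an infinite $Y_0 \subseteq A$ with an $f$-jump index, we extend to $(F \cup \{x\}, Y_0)$; otherwise we obtain an infinite $Y_1 \subseteq X \cap (x, \infty)$ whose elements have $\tilde{f}$-values in $[v_F, \tilde{f}(x) - 1]$, and we iterate on $(F, Y_1)$. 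Since each iteration strictly decreases the $\tilde{f}$-value of the chosen element while keeping it $\geq v_F$, the procedure terminates within $\tilde{f}(x) - v_F + 1$ steps.

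The second extension lemma forces a decision on $\Phi_e^{G \oplus C}(e)$. Given $(F, X)$ and~$e$, apply Theorem~\ref{thm:first-jump-d22} to the $\Delta^{0, X \oplus C}_2$ set $A_e$ consisting of $x \in X$ such that some finite $E \subseteq X \cap (\max F, x]$ with $\max E = x$ makes $F \cup E$ non-decreasing for $\tilde{f}$ and $\Phi_e^{(F \cup E) \oplus C}(e) \downarrow$. In the complement case, we obtain an infinite $Y_1 \subseteq X \setminus A_e$, and $(F, Y_1)$ forces $\Phi_e^{G \oplus C}(e) \uparrow$ because a non-decreasing finite $E \subseteq Y_1$ witnessing halting would put $\max E \in A_e$, a contradiction. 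In the positive case, we obtain an infinite $Y_0 \subseteq A_e$, $f$-computably extract a witness~$E$ with $\max E = x$, and set $F' = F \cup E$; we then apply Theorem~\ref{thm:first-jump-d22} once more to $B = \{y \in Y_0 \cap (x, \infty) : \tilde{f}(y) \geq \tilde{f}(x)\}$ to build a reservoir satisfying the threshold $v_{F'} = \tilde{f}(x)$. Should this step fail (giving an infinite set with strictly smaller $\tilde{f}$-values), we pick a new witness of smaller $\tilde{f}(\max E)$ from this set and recurse; termination is guaranteed since the threshold strictly decreases while remaining $\geq v_F$. Interleaving the two lemmas starting from $(\emptyset, \omega)$ yields the desired infinite non-decreasing subsequence $G = \bigcup_s F_s$ with $(G \oplus C)' \leq_T f$.

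The main technical point is that Theorem~\ref{thm:first-jump-d22} is stated with base~$C$, whereas we need to apply it at the shifted base $X \oplus C$. This is handled by the standard trick: from the $f$-jump index of $X \oplus C$ one $f$-computably builds an $(X \oplus C)'$-dnc${}_2$ function by s-m-n composition with $\Phi_i^f = (X \oplus C)'$, so Theorem~\ref{thm:first-jump-d22} applies at the shifted base. Threading the freshly produced $f$-jump indices through the iterations and the recursion above is the remaining bookkeeping, and is where most of the complexity of the formal proof lies.
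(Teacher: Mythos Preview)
Your ``standard trick'' in the last paragraph is where the argument breaks. You claim that from an $f$-jump index of $X \oplus C$ one can $f$-computably build an $(X \oplus C)'$-dnc${}_2$ function. This is false: being dnc${}_2$ over a set~$A$ is the same as being of PA degree over~$A$, and $A \leq_T f$ does not imply $f \gg A$ (no set is PA over itself; take $f = (X \oplus C)'$). The s-m-n manoeuvre you describe would need to rewrite $\Phi_e^{(X \oplus C)'}(e)$ as $\Phi_{s(e)}^{C'}(s(e))$ for a computable~$s$, but the left-hand side is an $f$-computation (via $\Phi_i^f = (X \oplus C)'$) and cannot be simulated with oracle~$C'$ unless $f \leq_T C'$. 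Consequently, after a single application of Theorem~\ref{thm:first-jump-d22} you lose the hypothesis needed to apply it again, and both your extension lemmas---which explicitly iterate Theorem~\ref{thm:first-jump-d22} at shifted bases---collapse.

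The paper repairs exactly this point by using not one $f$ but a uniformly $P$-computable tower $h_0 \equiv_T C', h_1, h_2, \dots$ with $h_{n+1}$ an $h_n$-dnc${}_2$ function; such a tower exists because its members form a nonempty $\Pi^{0,C'}_1$ class and $P \gg C'$. An $h_n$-condition carries a reservoir whose jump is below~$h_n$, and each extension step moves to~$h_{n+1}$, so the required PA-ness is always available. The paper also avoids your recursion-on-decreasing-values altogether: it assumes without loss of generality that no infinite $\tilde f$-constant set has jump below~$P$, which makes the ``small values'' outcome of Theorem~\ref{thm:first-jump-d22} impossible, so one application suffices. For deciding $\Phi_e^{G \oplus C}(e)\!\uparrow$ the paper does not use Theorem~\ref{thm:first-jump-d22} at all but instead adds, via the low basis theorem, a surrogate function to a side set~$S$ in the condition; this keeps the negative case a single, clean step rather than the nested recursion you propose.
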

\begin{proof}
Let~$f : \N \to \N$ be a $\Delta^{0,C}_2$ function $C$-computably bounded
by some function~$b : \N \to \N$.
Let $h_0, h_1, \dots$ be a uniformly $P$-computable sequence of functions
such that~$h_0 \equiv_T C'$, and $h_{i+1}$ is~$h_i$-dnc${}_2$ for every~$i \in \N$.
Assume that there is no infinite set~$G$ over which~$f$ is constant, and such that~$(G \oplus C)' \leq_T P$, otherwise we are done.
We will build our set~$G$ by a $P$-computable construction
using variants of Mathias conditions.

An \emph{$h_i$-condition} is a tuple~$(F, X, S)$
where~$(F, X)$ is a Mathias condition, 
$g(x) \leq g(y)$ for every~$x \in F$, $y \in X$ and~$g \in S \cup \{f\}$,
and $S$ is a finite collection of functions bounded~$b$,
such that $(X \oplus S \oplus C)' \leq_T h_i$.
A $h_j$-condition~$d = (E, Y, T)$ extends an $h_i$-condition~$c = (F, X, S)$
if~$j \geq i$,  $E \supseteq F$, $Y \subseteq X$, $T \supseteq S$ and 
$E \setminus F$ is a non-decreasing subset of~$X$ for every $g \in S \cup \{f\}$.
A set~$G$ satisfies $(F, X, S)$ if $F \subseteq G \subseteq F \cup X$ 
and $G \setminus F$ is non-decreasing for every~$g \in S \cup \{f\}$.

An \emph{$h_i$-index} of~$(F, X, S)$ is a code~$\tuple{i, F, e}$
such that~$\Phi^{h_i}_e = (X \oplus S \oplus C)'$.
Given an $h_i$-condition~$c = (F, X, S)$, we let $\#(c)$ be the number of functions~$g \in S$
such that~$g$ is not constant over~$X$. Note that an $h_{i+1}$-index of~$c$ can be $P$-computed
from an $h_i$-index of~$c$, and that $\#(c)$ can be $h_i$-computed from
an $h_i$-index of~$c$.

\begin{lemma}\label{lem:first-jump-cns-ext}
For every~$n \in \N$ and every~$h_n$-condition $c = (F, X, S)$, there is 
an $h_{n+1}$-extension $d = (E, Y, S)$ of~$c$
such that either~$\#(d) < \#(c)$, or $|E| > |F|$.
Furthermore, an $h_{n+1}$-index of~$d$ may be $P$-computed
from an $h_n$-index of~$c$.
\end{lemma}
\begin{proof}
Pick any~$x \in X$.
Since~$h_{n+1} \gg (X \oplus S \oplus C)'$, one can $h_{n+1}$-computably
decide if there is some~$g \in S$ and some~$u < g(x)$ such that the set~$Y_0 = \{ y \in X : g(y) = u \}$
is infinite, or whether the set~$Y_1 = \{ y \in X : (\forall g \in S)g(y) \geq g(x) \}$ is infinite.
In the first case, the $h_{n+1}$-condition~$d = (F, Y_0, S)$ is an extension of~$c$
such that~$\#(d) < \#(c)$.
In the second case, let $A = \{ y \in Y_1 : f(y) \geq f(x) \}$.
By Theorem~\ref{thm:first-jump-d22}, either we obtain a $h_{n+1}$-jump index of~$Z_0 \oplus X \oplus S \oplus C$,
where~$Z_0$ is an infinite subset of~$A$, or an $h_{n+1}$-jump index of~$Z_1 \oplus X \oplus S \oplus C$,
where~$Z_1$ is an infinite subset of~$Y_1 \setminus A$.
The second case cannot happen since otherwise, $f$ would be of bounded range over~$Z_1$,
and by further applications of Theorem~\ref{thm:first-jump-d22},
one would obtain an infinite set~$H$ over which~$f$ is constant and such that~$(H \oplus C)' \leq_T P$,
contradicting our initial assumption.
We therefore obtain an infinite set~$Z_0 \subseteq A$
such that $h_{n+1} \geq_T (Z_0 \oplus S \oplus C)'$.
The $h_{n+1}$-condition $d = (F \cup \{x\}, Z_0, S)$ is an extension of~$c$ satisfied
the desired property.
\end{proof}

An $h_i$-condition~$c = (F, X)$ \emph{decides} $(G \oplus C)'(e)$ if
either~$\Phi_e^{F \oplus C}(e) \downarrow$, or $\Phi_{e}^{(F \cup H) \oplus C}(e) \uparrow$
for every set~$H \subseteq X$.

\begin{lemma}\label{lem:first-jump-cns-decide}
For every~$n \in \N$, every~$h_n$-condition $c = (F, X, S)$ and every~$e \in \N$, there is 
an $h_{n+1}$-extension $d$ of~$c$
such that either~$\#(d) < \#(c)$, or $d$ decides $(G \oplus C)'(e)$.
Furthermore, an $h_{n+1}$-index of~$d$ may be $P$-computed
from an $h_n$-index of~$c$, and one can $P$-computably decide which case applies.
\end{lemma}
\begin{proof}
Let~$\Ccal$ be the $\Pi^{0,X \oplus S \oplus C}_1$ class of all functions~$p : \N \to \N$
bounded by~$b$, such that
$p(x) \leq p(y)$ for every~$x \in F$ and~$y \in X$, and for every finite set~$E \subseteq X$
non-decreasing for every~$g \in S \cup \{p\}$ simultaneously, $\Phi^{(F \cup E) \oplus C}(e) \uparrow$.
Since~$h_n \geq_T (X \oplus S \oplus C)'$, one can~$h_n$-decide whether~$\Ccal$ is empty or not.

If~$\Ccal$ is empty, then in particular~$f \not \in \Ccal$.
Unfolding the definition, there is a finite set~$E \subseteq X$
non-decreasing for every~$g \in S \cup \{f\}$, such that~$\Phi^{(F \cup E) \oplus C)}(e) \downarrow$.
As in Lemma~\ref{lem:first-jump-cns-ext}, one can $h_{n+1}$-computably decide
whether there is some~$g \in S$ and some~$u < \max \{g(x) : x \in E\}$ such that the set~$Y_0 = \{ y \in X : g(y) = u \}$
is infinite, or whether the set~$Y_1 = \{ y \in X : (\forall g \in S)(\forall x \in E)g(y) \geq g(x) \}$ is infinite.
In the first case, the $h_{n+1}$-condition~$d = (F, Y_0, S)$ is an extension of~$c$
such that~$\#(d) < \#(c)$.
In the second case, let $A = \{ y \in Y_1 : (\forall x \in E) f(y) \geq f(x) \}$.
Still by the same argument as in Lemma~\ref{lem:first-jump-cns-ext}, 
one can $P$-computably find an infinite set~$Z_0 \subseteq A$
such that~$h_{n+1} \geq_T (Z_0 \oplus S \oplus C)'$. The $h_{n+1}$-condition~$(F \cup E, Z_0, S)$
is an extension of~$c$ forcing~$(G \oplus C)'(e) = 1$.

If~$\Ccal \neq \emptyset$, then by the relativized low basis theorem~\cite{Jockusch197201},
one can $h_n$-computably pick some~$g \in \Ccal$ such that~$h_n \geq_T (g \oplus X \oplus S \oplus C)'$.
The $h_{n+1}$-condition~$(F, X, S \cup \{g\})$ is an extension of~$c$ forcing~$(G \oplus C)'(e) = 0$.
\end{proof}

Using Lemma~\ref{lem:first-jump-cns-ext} and Lemma~\ref{lem:first-jump-cns-decide}, build an infinite
$P$-computable decreasing sequence of tuples~$(\emptyset, \omega, \emptyset) = (F_0, X_0, S_0) \geq (F_1, X_1, S_1) \geq \dots$
such that for every~$s \in \N$,
\begin{itemize}
	\item[(i)] $(F_s, X_s, S_s)$ is an $h_n$-condition for some~$n \in \N$
	\item[(ii)] $|F_s| \geq s$
	\item[(iii)] $(F_s, X_s, S_s)$ decides $(G \oplus C)'(e)$
\end{itemize}
The set~$G = \bigcup_s F_s$ is an infinite non-decreasing subsequence for~$f$
such that~$(G \oplus C)' \leq_T P$. This completes the proof of Theorem~\ref{thm:first-jump-cns}.
\end{proof}

\begin{corollary}
Every computable instance of~$\mathsf{CNS}$ admits a low${}_2$ solution.
\end{corollary}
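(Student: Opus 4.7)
The plan is to derive the corollary as a direct application of Theorem~\ref{thm:first-jump-cns} with $C = \emptyset$, together with a suitable choice of the parameter $P$. The theorem gives, for any $P \gg C'$, an infinite non-decreasing subsequence $G$ for a $C$-computable instance of $\mathsf{CNS}$ with $(G \oplus C)' \leq_T P$. Specializing to $C = \emptyset$, the conclusion becomes $G' \leq_T P$, so to obtain a low${}_2$ solution it suffices to choose $P$ that is both of PA degree over $\emptyset'$ and low over $\emptyset'$, i.e.\ $P' \leq_T \emptyset''$.

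The existence of such a $P$ is immediate from the low basis theorem of Jockusch and Soare~\cite{Jockusch197201} relativized to $\emptyset'$: the class of completions of Peano arithmetic relative to $\emptyset'$ is a nonempty $\Pi^{0,\emptyset'}_1$ class, hence contains a member $P$ with $P' \leq_T (\emptyset')' = \emptyset''$. First I would invoke this to fix $P$; then I would apply Theorem~\ref{thm:first-jump-cns} to the given computable instance $f$ of $\mathsf{CNS}$ to produce an infinite non-decreasing subsequence $G$ with $G' \leq_T P$. Taking jumps once more yields $G'' \leq_T P' \leq_T \emptyset''$, which is exactly the statement that $G$ is low${}_2$.

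There is essentially no obstacle here, since all of the combinatorial work has already been absorbed into Theorem~\ref{thm:first-jump-cns}; the only point to verify is the compatibility of the two hypotheses on $P$, namely that one can simultaneously demand $P \gg \emptyset'$ and $P' \leq_T \emptyset''$, which is precisely what the relativized low basis theorem provides.
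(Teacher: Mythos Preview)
Your proposal is correct and follows essentially the same argument as the paper: apply the relativized low basis theorem to obtain $P \gg \emptyset'$ with $P' \leq_T \emptyset''$, invoke Theorem~\ref{thm:first-jump-cns} with $C = \emptyset$ to get $G' \leq_T P$, and conclude $G'' \leq_T \emptyset''$.
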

\begin{proof}
Let~$f$ be a computable instance of~$\mathsf{CNS}$.
By the relativized low basis theorem~\cite{Jockusch197201}, 
there is a set~$P \gg \emptyset'$ such that~$P' \leq_T \emptyset''$.
By Theorem~\ref{thm:first-jump-cns}, there is an infinite non-decreasing subsequence~$G$
for~$f$, such that~$G' \leq_T P$. In particular, $G'' \leq_T P' \leq_T \emptyset''$,
so~$G$ is low${}_2$.
\end{proof}

\section{Summary and open questions}

In this section, we summarize the known relations between $\mathsf{CNS}$, $\mathsf{LNS}$,
and existing principles in reverse mathematics. We also state some remaining open questions.
In Figure~\ref{fig:summary}, and plain arrow from~$\Psf$ to~$\Qsf$
means that~$\Psf$ implies~$\Qsf$ over~$\rca$, while a dotted arrow stands
for an open implication.

\begin{center}
\begin{figure}[htbp]\label{fig:summary}
\begin{tikzpicture}[x=2cm, y=1.5cm, 
		node/.style={minimum size=2em, inner sep=3pt},
		arrow/.style={very thick,->}
]

	\node[draw, minimum width = 7cm, minimum height = 6.5cm] at (3, 3.2) {};
	\node[node] (aca) at (3, 5) {$\aca$};
	\node[node] (wkl) at (4, 4) {$\wkl$};
	\node[node] (wwkl) at (4, 3) {$\wwkl$};
	\node[node] (dnr) at (3.5, 1.5) {$\dnr$};
	\node[node] (rt22) at (2, 4) {$\rt^2_2$};
	\node[node] (srt22) at (2, 3) {$\srt^2_2$};
	\node[node] (cns) at (3, 4) {$\mathsf{CNS}$};
	\node[node] (lns) at (3, 3) {$\mathsf{LNS}$};
	\node[node] (opt) at (2.5, 1.5) {$\opt$};

	\draw[arrow] (aca) -- (wkl);
	\draw[arrow] (aca) -- (cns);
	\draw[arrow] (aca) -- (rt22);
	\draw[arrow] (rt22) -- (srt22);
	\draw[arrow] (cns) -- (lns);
	\draw[arrow] (wkl) -- (wwkl);
	\draw[arrow] (srt22) -- (opt);
	\draw[arrow] (srt22) -- (dnr);
	\draw[arrow] (lns) -- (opt);
	\draw[arrow] (lns) -- (dnr);
	\draw[arrow] (wwkl) -- (dnr);
	\draw[arrow] (cns) -- (srt22);

	\draw[arrow, dotted] (rt22) -- (lns);
	\draw[arrow, dotted] (srt22) -- (lns);
	\draw[arrow, dotted] (cns) -- (rt22);

\end{tikzpicture}
\caption{Non-decreasing subsequences in reverse mathematics}
\end{figure}
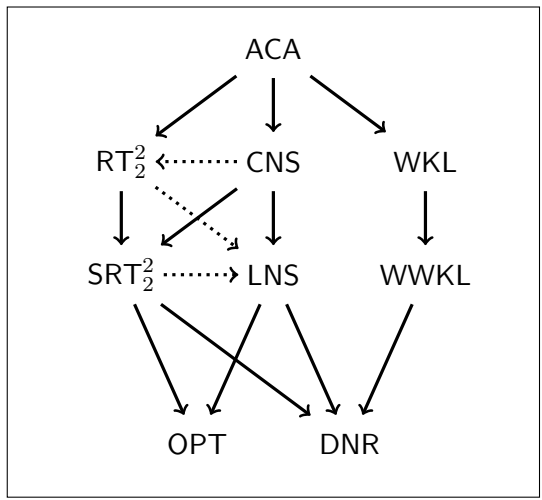
\end{center}

We wonder about the two remaining open implications
between Ramsey's theorem for pairs and the non-decreasing sequence statements.

\begin{question}
Does~$\mathsf{CNS}$ imply $\rt^2_2$ over~$\rca$?
\end{question}

\begin{question}
Does~$\rt^2_2$ imply~$\mathsf{LNS}$ over~$\rca$?
\end{question}

The same questions hold for $\omega$-models and over computable reducibility.

\vspace{0.5cm}

\noindent \textbf{Acknowledgements}.
The author is funded by the John Templeton Foundation (`Structure and Randomness in the Theory of Computation' project). The opinions expressed in this publication are those of the author(s) and do not necessarily reflect the views of the John Templeton Foundation.

\vspace{0.5cm}

\bibliographystyle{plain}

\end{document}